\newtheorem{theorem}{Theorem}[section]
\newtheorem{lemma}[theorem]{Lemma}
\newtheorem{proposition}[theorem]{Proposition}
\newtheorem{remark}[theorem]{Remark}
\newcommand{\R}{\mathbb{R}}
\newcommand{\N}{\mathbb{N}}
\newcommand{\p}{\partial}
\newcommand{\eps}{\varepsilon}
\newcommand{\dist}{\textnormal{dist}}
\newcommand{\diam}{\textnormal{diam}}
\newcommand{\norm}[1]{\left\lVert#1\right\rVert}
\newcommand{\loc}{\textnormal{loc}}
\renewcommand{\div}{\textnormal{div}}
\newcommand{\abs}[1]{\left\lvert#1\right\rvert}
\numberwithin{equation}{section}
\title[A singular perturbation approach]{A singular perturbation approach to the Dirichlet-area minimisation problem}
\author{Anthony Salib}
\address{Department of Mathematics, University of Duisburg-Essen, Thea-Leymann-Strasse 9, 45127 Essen,Germany.}
\email{anthony.salib@stud.uni-due.de}
\author{Georg S. Weiss}
\address{Department of Mathematics, University of Duisburg-Essen, Thea-Leymann-Strasse 9, 45127 Essen,Germany.}
\email{georg.weiss@uni-due.de}
\begin{document}
\begin{abstract}
	We study both one and two-phase minimisers of the Dirichlet-area energy 
	\begin{equation*}
		E(v) = \int_{B_1} \abs{\nabla v}^2 + Per(\{v>0\},B_1).
	\end{equation*}
	In the two-phase case, we show that the energies
	\begin{equation*}
	 E_{\eps}(v) = \int_{B_1}\abs{\nabla v}^2 + \frac{1}{\eps}W\left(\frac{v}{\eps^{1/2}}\right),
	\end{equation*}
	$\Gamma$-converge to $E$ as $\eps \to 0$, where $W$ is the double well potential extended by zero outside of $[-1,1]$ . As a consequence, we show that bounded local minimisers  of $E_{\eps}$ converge to a local minimiser of $E$.
\end{abstract}
\maketitle
\begin{center}
	\textit{In honor of Nina Uraltseva for her 90th birthday.}
\end{center}
\tableofcontents
\section{Introduction}
We study minimisers of the Dirichlet-area energy
\begin{equation}
	\label{eqn:dirichlet:area}
	 E(v)= \int_{B_1}\abs{\nabla v}^2 + Per(\{v>0\},B_1),
\end{equation}
where the perimeter functional is given by
	\begin{equation*}
		Per(\Omega,B_1) = \sup_{\xi \in C^{1}_0(B_1;\R^n), \norm{\xi}_{L^{\infty}} \leq 1} \int_{\Omega} \div (\xi).
	\end{equation*}
This energy was first introduced in \cite{athanasopoulos2001} and there the authors gave a complete study of two-phase minimisers and their free boundaries. It is also stated (without proof) in the introduction of \cite{athanasopoulos2001} that minimisers of \eqref{eqn:dirichlet:area} should appear as limits of minimisers of the energies
\begin{equation*}
	\label{eqn:eps:energy}
	E_{\eps}(v)=\int_{B_1} \left(\abs{\nabla v}^2 + \frac{1}{\eps}W\left(\frac{v}{\eps^{\frac{1}{2}}}\right)\right)
\end{equation*}
as $\eps \to 0$, where 
\begin{equation*}
	W(t) =
	\begin{cases}
		(1-t^2)^2 & \text{for } \abs{t} \leq 1,\\
		0 & \text{for } \abs{t} >1.
	\end{cases}
\end{equation*}

On the other hand, the one-phase problem was recently studied in \cite{savinAltPhillips} where it was shown that suitable rescalings of the functionals
\begin{equation*}
	\int_{B_1}\left(\abs{\nabla v}^2 + v^{\gamma}\chi_{\{v>0\}}\right)
\end{equation*}
$\Gamma$-converge to \eqref{eqn:dirichlet:area} as $\gamma \to -2$. We remark here that there exist several results in the literature on one-phase minimisers under additional constraints. For instance, minimising \eqref{eqn:dirichlet:area} under a \textit{volume constraint} of the zero level set, one phase minimisers have been studied in \cite{jiang2007} (see also \cite{lin2004New,silvestre2011full} for the two phase analogue with volume constraint). Moreover, under zero boundary conditions and an interior positivity condition, one-phase minimisers of
\begin{equation*}
	\int_{B_1} F(\abs{\nabla v}) + Per(\{v>0\},B_1),
\end{equation*}
for suitable $F$ have been studied in \cite{mazzone2003,haykshah2013convex,wolandski2021pLapPer}. As we will show in Proposition \ref{proposition:nonexistence}, some additional constraint is necessary in order to ensure the existence of a free boundary in the one-phase setting. Although simple, we have not found a proof of this non-existence result in the literature, and so we include it here. 

The aim of this current note is to rigorously establish the singular perturbation approach claimed in \cite{athanasopoulos2001} for the minimisation problem \eqref{eqn:dirichlet:area}. Since one-phase minimisers in our setting must be strictly positive in the interior of $B_1$, we will establish the $\Gamma$-convergence for true two-phase solutions. Using the $\Gamma$-convergence result we then show that local minimisers of $E_{\eps}(\cdot,B_1)$  which satisfy a uniform $L^{\infty}$ bound converge to a local minimiser of $E_{B_1}$.

\section{Preliminaries and main results}\label{section:preliminaries}
We first show that one-phase minimisers of \eqref{eqn:dirichlet:area} do not exhibit a free boundary. 
\begin{proposition}\label{proposition:nonexistence}
	Suppose that $u \in H^1(B_1)$ (not identically $0$) is a non-negative minimiser of $E$ in $B_1$, i.e. $E(u)\leq E(v)$ for all $v$ such that $u-v \in H^1_0(B_1)$. Then $u$ is strictly positive in the interior of $B_1$.
\end{proposition}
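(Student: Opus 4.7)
The plan is to argue in two steps. First I would show that the perimeter contribution in fact vanishes, i.e.~$Per(\{u>0\}, B_1) = 0$, so that $\{u=0\}$ has Lebesgue measure zero. Then I would show that $u$ is actually harmonic in $B_1$ and conclude strict positivity by the strong maximum principle.

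The key observation for the first step is that the perimeter is measured \emph{inside} $B_1$, so whenever $\{v>0\}$ coincides with $B_1$ up to a null set, the term $Per(\{v>0\},B_1)$ simply vanishes. I would therefore use as competitor $v_\delta = u + \delta(1-\abs{x}^2)$, which is strictly positive in $B_1$ and has the same trace as $u$ on $\partial B_1$. Then $Per(\{v_\delta>0\},B_1) = 0$, and after expanding the Dirichlet integral, minimality of $u$ becomes
\begin{equation*}
    Per(\{u>0\},B_1) \;\leq\; 2\delta \int_{B_1} \nabla u \cdot \nabla(1-\abs{x}^2) + \delta^2 \int_{B_1} \abs{\nabla(1-\abs{x}^2)}^2.
\end{equation*}
Sending $\delta \to 0^+$ yields $Per(\{u>0\},B_1) = 0$, so $\{u>0\}$ has full measure in $B_1$ (since $u\not\equiv 0$).

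For the second step, let $g$ denote the trace of $u$ on $\partial B_1$. If $g\equiv 0$, comparing $u$ with $v=0$ and using Step 1 gives $\int_{B_1}\abs{\nabla u}^2 \leq 0$, forcing $u\equiv 0$ --- a contradiction; hence $g\not\equiv 0$. Let $h$ be the harmonic extension of $g$ to $B_1$. Since $g \geq 0$ (as the trace of a non-negative $H^1$ function) and $g\not\equiv 0$, the strong maximum principle yields $h>0$ in $B_1$, so $E(h) = \int_{B_1}\abs{\nabla h}^2$. The minimality $E(u) \leq E(h)$ together with Step 1 then gives $\int_{B_1}\abs{\nabla u}^2 \leq \int_{B_1}\abs{\nabla h}^2$; on the other hand the Dirichlet principle applied to $h$ gives the reverse inequality, and writing $u=h+w$ with $w\in H^1_0(B_1)$ and exploiting $\int \nabla h \cdot \nabla w = 0$ forces $w\equiv 0$. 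Thus $u=h$ is harmonic in $B_1$, and strict positivity follows immediately.

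The argument is essentially routine once the correct competitors are identified; the main conceptual point (and in my view the only real obstacle) is recognising that $Per(B_1,B_1) = 0$, so that filling in the zero set of $u$ kills the perimeter contribution at merely linear-in-$\delta$ cost to the Dirichlet energy. Once this is noted, the remainder reduces to the Dirichlet principle and the strong maximum principle for harmonic functions.
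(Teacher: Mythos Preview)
Your proof is correct, and Step~2 is essentially the paper's own argument: compare $u$ with the harmonic extension $h$ of its trace, observe that $h>0$ so $Per(\{h>0\},B_1)=0$, and conclude via the Dirichlet principle and the strong maximum principle that $u=h>0$.

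The main difference is that your Step~1 is an unnecessary detour. The paper proceeds more directly: if $u$ is harmonic, apply the strong maximum principle; if not, the harmonic replacement $v=h$ satisfies $\int_{B_1}\abs{\nabla v}^2 < \int_{B_1}\abs{\nabla u}^2$ strictly, and since $Per(\{v>0\},B_1)=0\leq Per(\{u>0\},B_1)$ one obtains $E(v)<E(u)$, contradicting minimality. In other words, the comparison with $h$ already forces both $Per(\{u>0\},B_1)=0$ \emph{and} $u=h$ in one stroke, so there is no need for the separate competitor $v_\delta=u+\delta(1-\abs{x}^2)$. Your Step~1 is a nice self-contained observation (and the competitor $v_\delta$ is a natural choice), but it buys nothing that Step~2 does not already deliver.
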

\begin{proof}
	Firstly if $u$ is harmonic then by the strong maximum principle we must have that $u$ is positive in the interior of $B_1$. We now suppose that $u$ is not harmonic in $B_1$. Since $u$ is not identically zero it has non-zero trace on $\p B_1$ and so for the function $v \in H^1(B_1)$ satisfying $v = u$ on $\p B_1$ and $\Delta v =0$ in $B_1$, 
	there holds that $u-v \in H^1_0(B_1)$ and
	\begin{equation}
		\label{equation:compared:dirichlet}
		\int_{B_1} \abs{\nabla v}^2 - \int_{B_1} \abs{\nabla u}^2 < 0.
	\end{equation}
	 Moreover, since $v$ is harmonic we have that $v >0$ in $B_1$ and so $Per(\{v>0\}\cap B_1)=0$. This coupled with \eqref{equation:compared:dirichlet} yields
	\begin{equation*}
		E(v) < E(u),
	\end{equation*}
	contradicting the minimality of $u$.
\end{proof}

We will now describe our results regarding two-phase minimisers. Given any bounded domain $A\subset \R^n$ with Lipschitz boundary, we will consider the minimisation of the energies
\begin{equation}
	\label{eqn:eps:energydef}
	E_{\eps}(v, A) =  \int_{A} \left(\abs{\nabla v}^2 + \frac{1}{\eps}W\left(\frac{v}{\eps^{\frac{1}{2}}}\right)\right),
\end{equation}
over the set
\begin{equation*}
	\tilde{\mathscr{A}}(A) = \left\{v: v \in H^1(A)\right\}.
\end{equation*}

We define the increasing function
\begin{equation*}
	\label{equation:bigH:definition}
	H(t) = \int_{0}^t 2\sqrt{W(s)}ds,
\end{equation*}
as well as the constant $c_0 = 2H(1)$. We will also have need to define the renormalised function 
\begin{equation*}
	\tilde{H}(t) =\frac{2}{c_0}H(t)
\end{equation*}
satisfying $\tilde{H}(t) = 1$ for $t \geq 1$ and $\tilde{H}(t) = -1$ for $t \leq -1$. 
\begin{remark}
	If $u_{\eps_k}$ are local minimisers of $E_{\eps_k}$, then the functions $\tilde{H}\left(\frac{u_{\eps_k}}{\eps_k^{1/2}}\right)$ are compact in $L^1$, see Theorem \ref{theorem:main:existence} below. 
\end{remark}

Finally, we will consider the energy
\begin{equation}
	\label{eqn:energy:tech}
	E_{A}(v,\Omega) = \int_{A}\abs{\nabla v}^2 + c_0Per(\Omega,A),
\end{equation}
where the minimisation occurs over the set 
\begin{align*}
	\mathscr{A}(A) = \{(v,\Omega): & v \in H^1(A),\\ &\text{ $\Omega$ a set finite perimeter such that } v\vert_{\Omega\cap A} \geq 0 \text{ and } v\vert_{\Omega^c\cap A} \leq 0 \text{ a.e. }\}.
\end{align*}

Our first result is that the sequence $E_{\eps}(\cdot,A)$ defined in \eqref{eqn:eps:energydef} $\Gamma$-converge to the energy $E_A$ defined in \eqref{eqn:energy:tech}. 
\begin{theorem}[Gamma convergence]
	\label{theorem:gamma:convergence}
	Let $A \subset \R^n$ be bounded with Lipschitz boundary. Then the functionals $E_{\eps}(\cdot, A)$ $\Gamma$-converge to $E_A$ as $\eps \to 0$. That is, we have the following:
	\begin{enumerate}
		\item if $\eps_k \to 0$ and there holds that $u_{\eps_k} \to u$ in $L^2(A)$ and $\tilde{H}\left(\frac{u_{\eps_k}}{\eps_k^{1/2}}\right) \to \chi_{\Omega}-\chi_{\Omega^c}$ in $L^1(A)$ as $k \to \infty$, then 
		\begin{equation*}
				\liminf_{k\to\infty} E_{\eps_k}(u_{\eps_k},A) \geq E_A(u,\Omega)
		\end{equation*}
		\item if $(u,\Omega) \in \mathscr{A}(A)$ there exists $\eps_k \to 0$ and a sequence $\{u_{\eps_k}\}_{k\in \N}\subset H^1(A)$ such that $u_{\eps_k} \to u$ in $L^2(A)$ and $\tilde{H}\left(\frac{u_{\eps_k}}{\eps_k^{1/2}}\right) \to \chi_{\Omega}-\chi_{\Omega^c}$ in $L^1(A)$ as $k \to \infty$, and 
		\begin{equation*}
			\limsup_{k\to\infty} E_{\eps_k}(u_{\eps_k},B_1) \leq E_A(u,\Omega)
		\end{equation*}
	\end{enumerate}
\end{theorem}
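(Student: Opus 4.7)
Set $w_k := u_{\eps_k}/\eps_k^{1/2}$. Using that $W$ vanishes outside $[-1,1]$, I split
\begin{equation*}
E_{\eps_k}(u_{\eps_k},A) = \int_{\{|w_k|>1\}}|\nabla u_{\eps_k}|^2 + \int_{\{|w_k|\leq 1\}}\left(|\nabla u_{\eps_k}|^2 + \tfrac{1}{\eps_k}W(w_k)\right).
\end{equation*}
In the second integral $|\nabla u_{\eps_k}|^2 = \eps_k|\nabla w_k|^2$, so the Modica--Mortola AM--GM inequality $\eps_k|\nabla w_k|^2 + \tfrac{1}{\eps_k}W(w_k)\geq 2\sqrt{W(w_k)}|\nabla w_k| = |\nabla H(w_k)|$, combined with the fact that $\nabla H(w_k)=0$ on $\{|w_k|>1\}$, gives
\begin{equation*}
E_{\eps_k}(u_{\eps_k},A)\geq \int_{\{|w_k|>1\}}|\nabla u_{\eps_k}|^2 + \int_A|\nabla H(w_k)|.
\end{equation*}
If the left-hand side stays bounded (else (1) is trivial), $u_{\eps_k}$ is bounded in $H^1(A)$ and hence $u_{\eps_k}\rightharpoonup u$ weakly in $H^1(A)$ along a subsequence. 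Since $\chi_{\{|w_k|>1\}}\to 1$ a.e.\ on $\{u\neq 0\}$ (while $\nabla u = 0$ a.e.\ on $\{u=0\}$), dominated convergence of the indicator against any $L^2$ test function combined with weak $L^2$ convergence of $\nabla u_{\eps_k}$ yields $\chi_{\{|w_k|>1\}}\nabla u_{\eps_k}\rightharpoonup \nabla u$ weakly in $L^2(A)$, and $L^2$-lower semicontinuity gives $\liminf \int_{\{|w_k|>1\}}|\nabla u_{\eps_k}|^2\geq \int_A|\nabla u|^2$. Since $H(w_k) = (c_0/2)\tilde H(w_k) \to (c_0/2)(\chi_\Omega - \chi_{\Omega^c})$ in $L^1(A)$ by hypothesis, $L^1$-lower semicontinuity of the total variation gives $\liminf \int_A|\nabla H(w_k)|\geq c_0\,Per(\Omega,A)$. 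Adding the two bounds proves (1).

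\textbf{Limsup inequality.} By a diagonal and density argument (approximating $\Omega$ by open sets with smooth boundary and converging perimeter, and $u$ by smooth functions respecting the sign constraint via mollified truncations of $u^\pm$ supported in the approximating sets), it suffices to construct the recovery sequence when $u$ is smooth, $\partial\Omega$ is smooth in $A$, and $u\vert_{\partial\Omega}=0$. Let $q:\R\to(-1,1)$ be the optimal profile solving $q'=\sqrt{W(q)}$ with $q(0)=0$, and let $\tilde q_T$ be a smooth truncation equal to $\pm 1$ for $|s|\geq T+1$ satisfying $\int_\R[\tilde q_T'^2+W(\tilde q_T)]\to c_0$ as $T\to\infty$ (which is possible since $q\to\pm 1$ exponentially). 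With $d$ the signed distance to $\partial\Omega$ (positive inside), I set
\begin{equation*}
v_{\eps,T}(x) = u(x) + \eps^{1/2}\tilde q_T(d(x)/\eps).
\end{equation*}
Outside the strip $\{|d|<\eps(T+1)\}$, whose measure is $O(\eps)$, we have $v_{\eps,T}/\eps^{1/2}\geq 1$ on $\Omega$ and $v_{\eps,T}/\eps^{1/2}\leq -1$ on $\Omega^c$, so $W(v_{\eps,T}/\eps^{1/2})$ vanishes and the energy density reduces to $|\nabla u|^2$. Inside the strip, the coarea formula (with $|\nabla d|=1$) yields $\eps^{-1}\int\tilde q_T'(d/\eps)^2\,dx\to Per(\Omega,A)\int\tilde q_T'^2$ and $\eps^{-1}\int W(\tilde q_T(d/\eps))\,dx \to Per(\Omega,A)\int W(\tilde q_T)$, while the cross term $2\eps^{-1/2}\int\tilde q_T'(d/\eps)\nabla u\cdot\nabla d$ is $O(\eps^{1/2})$ (since its integrand is supported in the $O(\eps)$-strip) and the error from replacing $W(v_{\eps,T}/\eps^{1/2})$ by $W(\tilde q_T(d/\eps))$ is $O(\eps^{1/2}T)$ because $|u|=O(\eps T)$ in the strip. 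A diagonal choice $T(\eps)\to\infty$ slowly then gives $\limsup E_\eps(v_{\eps,T(\eps)})\leq \int|\nabla u|^2 + c_0\,Per(\Omega,A)$. The convergence $v_{\eps,T}\to u$ in $L^2$ is immediate from $|v_{\eps,T}-u|\leq\eps^{1/2}$, and $\tilde H(v_{\eps,T}/\eps^{1/2})\to\chi_\Omega-\chi_{\Omega^c}$ in $L^1$ holds because they coincide outside the $O(\eps)$-strip.

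\textbf{Main obstacle.} The essential novelty compared to classical Modica--Mortola is that the Dirichlet term $\int|\nabla v|^2$ is not weighted by $\eps$ and therefore persists in the $\Gamma$-limit together with the interface energy. The compact support of $W$ is the crucial structural feature that makes this coexistence clean: in the liminf it naturally separates the domain into the bulk $\{|w_k|>1\}$ (where $W=0$ and only the Dirichlet term acts) and the transition layer $\{|w_k|\leq 1\}$ (where the Modica--Mortola bound produces the interface contribution), and in the limsup it allows the simple additive ansatz $v_\eps=u+\eps^{1/2}\tilde q_T(d/\eps)$ to push $v_\eps/\eps^{1/2}$ outside $[-1,1]$ throughout the bulk without any correction of the potential, even where $u$ is small. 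I expect the most delicate step to be the density approximation of a general pair $(u,\Omega)\in\mathscr{A}(A)$ by smooth data while preserving the sign constraint $u\geq 0$ on $\Omega$, $u\leq 0$ on $\Omega^c$ and simultaneously achieving perimeter convergence $Per(\Omega_n,A)\to Per(\Omega,A)$.
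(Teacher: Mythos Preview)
Your liminf argument follows the same Modica--Mortola splitting as the paper. The one imprecise step is the claim that $\chi_{\{|w_k|>1\}}\nabla u_{\eps_k}\rightharpoonup\nabla u$: the product of a weakly convergent sequence and a merely a.e.-convergent bounded sequence need not converge weakly, and on $\{u=0\}$ the indicator may not converge at all. This is easily repaired (restrict first to $\{|u|>\delta\}$, where the indicator does converge boundedly a.e., and let $\delta\to0$ at the end). The paper sidesteps the issue entirely via the identity
\[
\int_{\{|u_{\eps_k}|>\eps_k^{1/2}\}}|\nabla u_{\eps_k}|^2=\int_A\bigl|\nabla(|u_{\eps_k}|-\eps_k^{1/2})_+\bigr|^2
\]
together with lower semicontinuity of the Dirichlet energy under the $L^2$ convergence $(|u_{\eps_k}|-\eps_k^{1/2})_+\to|u|$.

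Your limsup construction is genuinely different from the paper's. You use the \emph{additive} ansatz $v_{\eps,T}=u+\eps^{1/2}\tilde q_T(d/\eps)$, relying on the sign constraint $u\ge0$ on $\Omega$, $u\le0$ on $\Omega^c$ to push $v_{\eps,T}/\eps^{1/2}$ out of $[-1,1]$ in the bulk, and on a Lipschitz bound for $W$ to control the potential error in the strip. The paper instead builds its recovery sequence by a max/min \emph{gluing}: it takes the one-dimensional profile $\varphi_{\eps_k}(s_j(x))$ near $\partial\Omega_j$ and patches it to vertically shifted truncations $(u-\bar\delta_{k,j})_+$ and $-(u-\underline\delta_{k,j})_-$ of $u$ via $\max$ and $\min$, the shifts being chosen so the pieces meet correctly. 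Your additive construction is closer to the classical Modica--Mortola recovery and arguably more transparent; the paper's gluing has the feature that the recovery sequence coincides \emph{exactly} with the one-dimensional profile in the transition layer (so the pointwise equality $|\nabla u_{k,j}|^2=\eps_k^{-1}W(u_{k,j}/\eps_k^{1/2})$ holds there rather than approximately), which makes the energy computation cleaner and dovetails with the interpolation lemma used later for local minimisers. Both approaches relegate the density step---approximating a general $(u,\Omega)\in\mathscr A(A)$ by smooth data while preserving both the sign constraint and perimeter convergence---to a remark, and you are right to single it out as the delicate point.
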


For any $M > 0$ we define the sets 
\begin{equation*}
	\tilde{\mathscr{A}}_M(A) = \left\{v \in H^1(A): \abs{v} \leq M\right\},
\end{equation*}
and 
\begin{align*}
	\mathscr{A}_M(A) = \{(v,\Omega): & v \in H^1(A), \abs{v}\leq M,\\ &\text{ $\Omega$ a set finite perimeter such that } v\vert_{\Omega\cap A} \geq 0 \text{ and } v\vert_{\Omega^c\cap A} \leq 0 \text{ a.e. }\}.
\end{align*}
We now have the following result concerning bounded local minimisers. 
\begin{theorem}
	\label{theorem:main:existence}
	Let $\eps_k \to 0$ as $k \to \infty$ and suppose that $u_{\eps_k} \in \tilde{\mathscr{A}}_M$ satisfies 
	\begin{equation*}
		E_{\eps_k}(u_{\eps_k},B_1) \leq E_{\eps_k}(v,B_1)
	\end{equation*}
	for each $v \in \tilde{\mathscr{A}}_M$ satisfying $u_{\eps_k} - v \in H^1_0(B_1)$. 
	Then, up to a subsequence, we have that
	\begin{equation*}
		u_{\eps_k} \to u \text{ in } L^2_{\loc}(B_1),
	\end{equation*}
	and
	\begin{equation*}
	 \tilde{H}\left(\frac{u_{\eps_k}}{\eps_k^{1/2}}\right) \to \chi_{\Omega}-\chi_{\Omega^c} \text{ in } L^1_{\loc}(B_1),
	\end{equation*}
	where $(u,\Omega) \in \mathscr{A}_M$ is a local minimiser of the energy $E_{B_1}$ over the set $\mathscr{A}_M$.
\end{theorem}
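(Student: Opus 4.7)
My plan is to combine Modica--Mortola-style compactness with the $\Gamma$-convergence of Theorem \ref{theorem:gamma:convergence} through a gluing construction that transfers local minimality from the $u_{\eps_k}$ to the limit. For every $K \Subset B_1$ a uniform bound $E_{\eps_k}(u_{\eps_k},K) \leq C(K,M)$ is obtained by comparing $u_{\eps_k}$ with a smooth extension of its own trace on the boundary of a slightly larger ball, truncated into $[-M,M]$. Setting $w_k := \tilde H(u_{\eps_k}/\eps_k^{1/2})$, the chain rule together with $2ab \leq a^2+b^2$ gives
\[
 |\nabla w_k| \leq \frac{2}{c_0}\Bigl(|\nabla u_{\eps_k}|^2 + \frac{1}{\eps_k}W(u_{\eps_k}/\eps_k^{1/2})\Bigr),
\]
so $\{w_k\}$ is uniformly bounded in $BV_{\loc}(B_1)$. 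A subsequence converges in $L^1_{\loc}(B_1)$ to some $w$, and since $\int_K W(u_{\eps_k}/\eps_k^{1/2})\,dx \leq C\eps_k \to 0$, $w$ takes values in $\{-1,+1\}$ almost everywhere and equals $\chi_\Omega - \chi_{\Omega^c}$. The bound $|u_{\eps_k}|\leq M$ together with the uniform energy estimate gives $u_{\eps_k} \rightharpoonup u$ in $H^1_{\loc}(B_1)$ and strongly in $L^2_{\loc}(B_1)$. Along a further pointwise a.e.\ subsequence, the limit $\pm 1$ of $w_k$ forces the sign of $u$ to be compatible with $\Omega$, so $(u,\Omega)\in\mathscr{A}_M$.

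For the local minimality, fix a competitor $(v,\Lambda)\in\mathscr{A}_M$ with $(v,\Lambda)=(u,\Omega)$ outside some compact $K \Subset B_1$. Choose $K\Subset A_1\Subset A_2\Subset B_1$ and apply Theorem \ref{theorem:gamma:convergence}(2) on $A_2$ to produce a recovery sequence $v_{\eps_k}\to v$ in $L^2(A_2)$ with $\tilde H(v_{\eps_k}/\eps_k^{1/2})\to\chi_\Lambda-\chi_{\Lambda^c}$ in $L^1(A_2)$ and $\limsup_k E_{\eps_k}(v_{\eps_k},A_2)\leq E_{A_2}(v,\Lambda)$. Because the standard construction is based on the optimal transition profile, which saturates at $\pm 1$, one can arrange $|v_{\eps_k}|\leq M$ for $\eps_k$ small. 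Since $v=u$ and $\Lambda=\Omega$ on the collar $A_2\setminus A_1$, both $v_{\eps_k}$ and $u_{\eps_k}$ tend to the same $L^2$ limit there, and a Fubini argument over radii in the collar selects a level set $S_k\subset A_2\setminus A_1$ along which $\int_{S_k}|u_{\eps_k}-v_{\eps_k}|^2\,d\mathcal{H}^{n-1}\to 0$. Interpolating linearly between $v_{\eps_k}$ on the inside and $u_{\eps_k}$ on the outside over a thin tubular neighbourhood of $S_k$, and truncating at $\pm M$, yields an admissible competitor $\tilde v_{\eps_k}\in\tilde{\mathscr{A}}_M$ with $\tilde v_{\eps_k}-u_{\eps_k}\in H^1_0(B_1)$ whose extra energy on the collar is $o(1)$ as $k\to\infty$.

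Minimality of $u_{\eps_k}$ against $\tilde v_{\eps_k}$, combined with the liminf inequality of Theorem \ref{theorem:gamma:convergence}(1) applied on $A_1$ on the left and the limsup property of $v_{\eps_k}$ on the right, yields $E_{A_1}(u,\Omega) \leq E_{A_2}(v,\Lambda) + o(1)$. Shrinking $A_2\setminus A_1$ and then letting $A_1\nearrow B_1$ produces the desired inequality $E_{B_1}(u,\Omega)\leq E_{B_1}(v,\Lambda)$. The principal obstacle is the gluing step: the interpolant must simultaneously remain bounded by $M$, match $u_{\eps_k}$ on $\partial A_2$, and contribute negligible Modica--Mortola energy on the collar. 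The last requirement is delicate because the term $\eps_k^{-1}W$ penalises any excursion of the interpolant away from the two wells $\pm\eps_k^{1/2}$; the key input making the construction succeed is that $u_{\eps_k}$ and $v_{\eps_k}$ share the same $L^2$ limit on the collar, so an averaged Fubini slice carries vanishing transition cost in the limit.
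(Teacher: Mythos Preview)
Your compactness argument (energy bounds, $BV$-bound on $w_k=\tilde H(u_{\eps_k}/\eps_k^{1/2})$, identification of the limit as $\chi_\Omega-\chi_{\Omega^c}$, and the sign compatibility $(u,\Omega)\in\mathscr A_M$) is correct and matches the paper. The gap is in the gluing step.

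A linear interpolation $\tilde v_{\eps_k}=\eta\,u_{\eps_k}+(1-\eta)\,v_{\eps_k}$ over a tube of width $\sigma$ controls the Dirichlet part: the cross term $\sigma^{-2}\int|u_{\eps_k}-v_{\eps_k}|^2$ is handled by the common $L^2$ limit on the collar, exactly as you say. But it does \emph{not} control the potential term $\eps_k^{-1}\int W(\tilde v_{\eps_k}/\eps_k^{1/2})$. On the set where $u_{\eps_k}$ and $v_{\eps_k}$ sit in opposite wells (say $u_{\eps_k}\ge\eps_k^{1/2}$ and $v_{\eps_k}\le-\eps_k^{1/2}$), the convex combination necessarily crosses the full interval $[-\eps_k^{1/2},\eps_k^{1/2}]$ as $\eta$ varies, and the resulting potential energy on the tube is of order $\eps_k^{-1}$ times the measure of that opposite-phase set. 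You only know this measure is $o(1)$ (from $\tilde H(u_{\eps_k}/\eps_k^{1/2})-\tilde H(v_{\eps_k}/\eps_k^{1/2})\to0$ in $L^1$), not $o(\eps_k)$; the common $L^2$ limit gives no leverage here because $|u_{\eps_k}-v_{\eps_k}|$ can be as small as $2\eps_k^{1/2}$ on that set while the phases disagree. So the claimed ``vanishing transition cost'' does not follow from your Fubini slice argument.

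The paper resolves this with a different interpolant (Proposition~\ref{proposition:interpolation}): one builds a radial barrier $\psi_{r,k}(x)=\varphi_{\theta,\eps_k}(|x|-r)$ from the one-dimensional profile of Lemma~\ref{lemma:1D:properties:theta} and sets $w_{\eps_k}=\min\{u_{\eps_k},\max\{v_{\eps_k},\psi_{r,k}\}\}$. Because $\psi_{r,k}$ itself satisfies the equipartition identity $|\psi'|^2=\eps_k^{-1}W(\psi/\eps_k^{1/2})+\theta$, its energy on the set $\{v_{\eps_k}<\psi_{r,k}<u_{\eps_k}\}$ can be rewritten via the coarea formula and, after averaging over $r$, is bounded by $C(\delta)\bigl(\|\tilde H(u_{\eps_k}/\eps_k^{1/2})-\tilde H(v_{\eps_k}/\eps_k^{1/2})\|_{L^1}+\|u_{\eps_k}-v_{\eps_k}\|_{L^1}\bigr)$, with no factor of $\eps_k^{-1}$. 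Both hypotheses \eqref{equation:convergence} and \eqref{equation:convergence:H} are needed, and the use of the optimal one-dimensional profile---rather than a linear cut-off---is what makes the potential contribution harmless.
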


\section{$\Gamma$-convergence}\label{section:gammaconvergence}

We will first collect some properties of $1$D local minimisers $E_{\eps}$. Notice that if $\varphi_{\eps}$ is a $1$D local minimiser, it satisfies the Euler-Lagrange equation
\begin{equation}
	\label{equation:1d:EL}
	2\varphi''_{\eps} = \frac{1}{\eps^{3/2}}W'\left(\frac{\varphi_{\eps}}{\eps^{1/2}}\right),
\end{equation}
so that multiplying with $\varphi_{\eps}'$ we find, assuming that $\varphi_{\eps}' \geq 0$, that
\begin{equation}
	\label{equation:1d:family}
	(\varphi_{\eps}')^2 = \frac{1}{\eps}W\left(\frac{\varphi_{\eps}}{\eps^{1/2}}\right) + \left[(\varphi_{\eps}'(0))^2 -\frac{1}{\eps}W\left(\frac{\varphi_{\eps}(0)}{\eps^{1/2}}\right) \right].
\end{equation}
This property gives a family of 1D solutions depending on the initial conditions $\varphi_{\eps}'(0)$ and  $\varphi_{\eps}(0)$. 
\begin{lemma}
	\label{lemma:1d:properties}
	Let $\varphi_{\eps}$ solve \eqref{equation:1d:EL} with $\varphi_{\eps}(0) =0$ and $\varphi_{\eps}'(0)=\eps^{-1/2}$. Given $\kappa \in (0,1)$ there exists a $t_{\eps} = t_{\eps}(\kappa)$ such that $\abs{\varphi_{\eps}(s)}\geq \eps^{1/2}(1-\kappa)$ for $\abs{s}\geq t_{\eps}$ and $t_{\eps} \to 0$ as $\eps \to 0$. Moreover, for each $\gamma >0$ there exists a $\kappa >0$ such that for each $\eps > 0$ there holds that
	\begin{equation}
		\label{equation:energy:compared}
		\sup_{ t_{\eps} \leq \abs{s} \leq L } \frac{1}{\eps}W\left(\frac{\varphi_{\eps}(s)}{\eps^{1/2}}\right) \leq \gamma,
	\end{equation}
	for any $L > t_{\eps}$. Finally, $\varphi_{\eps} \to 0$ uniformly as $\eps \to 0$. 
\end{lemma}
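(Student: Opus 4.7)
The initial conditions $\varphi_\eps(0)=0$, $\varphi_\eps'(0)=\eps^{-1/2}$ are chosen precisely so that the bracketed constant in \eqref{equation:1d:family} vanishes (since $W(0)=1$), leaving the first integral
\[
(\varphi_\eps')^2 \;=\; \frac{1}{\eps}\,W\!\left(\frac{\varphi_\eps}{\eps^{1/2}}\right).
\]
The plan is to solve this Cauchy problem explicitly and read off all three conclusions. Introducing the rescaling $u(t):=\eps^{-1/2}\varphi_\eps(\eps t)$, one checks that $u(0)=0$, $u'(0)=1$ and $(u')^2=W(u)$. Since the orbit starts in the interior of the well with $u'(0)>0$ and the conserved quantity $W(u)$ is strictly positive on $(-1,1)$ while vanishing at $\pm 1$, the orbit remains in $(-1,1)$ for all times and we may write $u'=1-u^2$. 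Separation of variables then gives $u(t)=\tanh(t)$, so that
\[
\varphi_\eps(s)\;=\;\eps^{1/2}\tanh(s/\eps).
\]

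From this closed form, the first and third assertions are immediate. For the first, inverting $\eps^{1/2}|\tanh(s/\eps)|\geq\eps^{1/2}(1-\kappa)$ yields $|s|\geq t_\eps(\kappa):=\eps\,\mathrm{arctanh}(1-\kappa)$, and $t_\eps\to 0$ as $\eps\to 0$ because $\mathrm{arctanh}(1-\kappa)$ is a fixed positive constant. For the third, uniform convergence follows from $\norm{\varphi_\eps}_{L^\infty(\R)}\leq \eps^{1/2}\to 0$, which is immediate from $|\tanh|\leq 1$.

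For the second assertion, I would observe that the map $s\mapsto W(\tanh(s/\eps))=(1-\tanh^2(s/\eps))^2$ is even in $s$ and strictly decreasing in $|s|$, so the supremum on $t_\eps\leq |s|\leq L$ is attained at the endpoint $|s|=t_\eps$, independently of $L$. At that endpoint $|\varphi_\eps|/\eps^{1/2}=1-\kappa$, giving
\[
W\!\left(\frac{\varphi_\eps(s)}{\eps^{1/2}}\right)\;=\;\bigl(1-(1-\kappa)^2\bigr)^2\;=\;\kappa^2(2-\kappa)^2,
\]
which is made as small as desired in terms of $\gamma$ by shrinking $\kappa$, uniformly in the parameters. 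This encodes the key energy-concentration property: outside the transition layer of width $t_\eps$, the profile sits so close to the wells that the potential term is negligible.

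No substantial obstacle is anticipated, since the whole argument reduces to a controlled ODE computation. The one point worth stressing is that the particular initial data were selected precisely so that the first integral collapses to the classical heteroclinic relation $(u')^2=W(u)$; once this collapse is recognized, elementary manipulations of the $\tanh$ profile deliver all three claims.
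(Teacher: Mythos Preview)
Your argument is essentially the paper's, made more explicit: both reduce to the first integral $(\varphi_\eps')^2=\eps^{-1}W(\varphi_\eps/\eps^{1/2})$ and exploit the scaling $\varphi_\eps=\eps^{1/2}\varphi_1(\cdot/\eps)$; you go one step further and identify $\varphi_1=\tanh$, which yields closed forms for $t_\eps$ and makes the first and third assertions completely transparent.

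There is, however, a genuine gap in your treatment of \eqref{equation:energy:compared}. You compute $W(\varphi_\eps(t_\eps)/\eps^{1/2})=\kappa^2(2-\kappa)^2$ and declare this smaller than $\gamma$ ``uniformly in the parameters'', but the quantity to be controlled is $\eps^{-1}W(\cdot)$, not $W(\cdot)$. At $|s|=t_\eps$ your own formula gives $\eps^{-1}\kappa^2(2-\kappa)^2$, which diverges as $\eps\to 0$ for any fixed $\kappa>0$, so no choice of $\kappa$ independent of $\eps$ can work. The paper's proof contains the identical oversight: it asserts $|\varphi_\eps'(s)|\le\gamma^{1/2}$ for $|s|\ge t_\eps$ with $\kappa$ chosen from $\gamma$ alone, which via the first integral is the same false bound. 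With the quantifier order $\forall\gamma\,\exists\kappa\,\forall\eps$ the estimate \eqref{equation:energy:compared} simply cannot hold at the endpoint $|s|=t_\eps(\kappa)$; this is a defect in the statement itself rather than in your strategy, and your explicit $\tanh$ computation in fact exposes it cleanly.
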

\begin{proof}
	Since $\varphi_{\eps}(0) =0$ and $\varphi_{\eps}'(0)=\eps^{-1/2}$ we have from \eqref{equation:1d:family} that
	\begin{equation}
		\label{equation:1d:property}
		\varphi_{\eps}' = \frac{1}{\eps^{1/2}}\sqrt{W\left(\frac{\varphi_{\eps}}{\eps^{1/2}}\right)} .
	\end{equation}
	The behaviour of $\varphi_{\eps}$ and $t_{\eps}$ is then clear from \eqref{equation:1d:property} and \eqref{equation:1d:EL} since we can write $\varphi_{\eps} = \eps^{1/2} \varphi_1(\eps^{-1} \cdot)$. 
	To establish \eqref{equation:energy:compared}, observe that given  $\gamma > 0$ there exists a $\kappa > 0$ such that
	$$\abs{\varphi_{\eps}'(s) } \leq \gamma^{1/2},$$
	for any $\abs{s} \geq t_{\eps}$. Using \eqref{equation:1d:property} once more we have that 
	\begin{equation*}
		\frac{1}{\eps^{1/2}}\sqrt{W\left(\frac{\varphi_{\eps}}{\eps^{1/2}}\right)} \leq \gamma^{1/2},
	\end{equation*}
	which establishes \eqref{equation:energy:compared}.
\end{proof}

Lemma \ref{lemma:1d:properties} establishes the existence of a local minimiser $\varphi_{\eps}$ which satisfies $\varphi_{\eps}(s) \to \pm \eps^{1/2}$ as $s \to \pm \infty$. We will also have need for 1D solutions which are linear outside of an interval around $0$, which is the content of the following Lemma. 
\begin{lemma}
	\label{lemma:1D:properties:theta}
	Let $\varphi_{\theta,\eps}$ solve \eqref{equation:1d:EL} with $\varphi_{\theta,\eps}(0) =0$ and $\varphi_{\theta,\eps}'(0)=\sqrt{\eps^{-1}+\theta}$. Then there exists a $t_{\eps}$ such that $\abs{\varphi_{\theta,\eps}'(s)} =  \theta$ for $\abs{s} \geq t_{\eps}$, where $t_{\eps} \to 0$ as $\eps \to 0$. Finally, $\varphi_{\theta,\eps}(s) \to \theta s$ uniformly as $\eps \to 0$. 
\end{lemma}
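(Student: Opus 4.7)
The plan is to imitate the proof of Lemma \ref{lemma:1d:properties}: extract a first integral from \eqref{equation:1d:EL}, pass to a rescaling that is essentially independent of $\eps$ in the limit, and read off the behaviour of $\varphi_{\theta,\eps}$ from the canonical ODE.

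First, I would multiply \eqref{equation:1d:EL} by $\varphi_{\theta,\eps}'$ and integrate; using $\varphi_{\theta,\eps}(0)=0$, $W(0)=1$, and $(\varphi_{\theta,\eps}'(0))^2 = \eps^{-1}+\theta$, this yields the first integral
\begin{equation*}
(\varphi_{\theta,\eps}')^2 \;=\; \frac{1}{\eps}\,W\!\left(\frac{\varphi_{\theta,\eps}}{\eps^{1/2}}\right) + \theta.
\end{equation*}
Since the right hand side is strictly positive and $\varphi_{\theta,\eps}'(0)>0$, $\varphi_{\theta,\eps}$ is monotonically increasing. As soon as $\varphi_{\theta,\eps}/\eps^{1/2}$ leaves $[-1,1]$, both $W$ and $W'$ vanish, so \eqref{equation:1d:EL} forces $\varphi_{\theta,\eps}''=0$ and the solution continues affinely with the constant slope dictated by the first integral, giving the asymptotic value of $\abs{\varphi_{\theta,\eps}'}$ claimed in the statement.

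To show $t_\eps\to 0$, I would pass to the standard rescaling $\psi_\eps(\tau):= \eps^{-1/2}\varphi_{\theta,\eps}(\eps\tau)$, which satisfies $2\psi_\eps''=W'(\psi_\eps)$ with $\psi_\eps(0)=0$, $\psi_\eps'(0)=\sqrt{1+\eps\theta}$, and hence $(\psi_\eps')^2=W(\psi_\eps)+\eps\theta$. Separation of variables expresses the first hitting time $T_\eps$ of level $1$ as
\begin{equation*}
T_\eps \;=\; \int_0^1 \frac{d\psi}{\sqrt{W(\psi)+\eps\theta}}.
\end{equation*}
The integrand is singular only near $\psi=1$, where $W(\psi)\sim 4(1-\psi)^2$; the substitution $u=1-\psi$ reduces matters to $\int_0^\delta du/\sqrt{4u^2+\eps\theta}$, which yields $T_\eps = O(\abs{\log\eps})$. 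Consequently $t_\eps := \eps T_\eps \to 0$.

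Finally, for uniform convergence, on $[-t_\eps,t_\eps]$ one has $\abs{\varphi_{\theta,\eps}}\leq \eps^{1/2}$, while outside this interval $\varphi_{\theta,\eps}$ is affine with the asymptotic slope and endpoint value $\pm\eps^{1/2}$ at $\pm t_\eps$. Since $\eps^{1/2}$ and $t_\eps$ both vanish, the affine profile converges uniformly on compact sets to the linear limit. The only genuinely non-routine ingredient is the logarithmic estimate on $T_\eps$, which must quantify the interaction between the nearly-degenerate zero of $W$ at $\psi=1$ and the small shift $\eps\theta$; I expect this to be the main (modest) obstacle, everything else being a book-keeping adaptation of Lemma \ref{lemma:1d:properties}.
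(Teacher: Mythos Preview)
Your approach is correct and follows the same path as the paper: derive the first integral $(\varphi_{\theta,\eps}')^2 = \eps^{-1}W(\varphi_{\theta,\eps}/\eps^{1/2}) + \theta$, observe monotonicity, and conclude that the solution becomes affine once it exits $[-\eps^{1/2},\eps^{1/2}]$. The only difference is in how you show $t_\eps\to 0$: you compute the hitting time explicitly and obtain the logarithmic bound $T_\eps = O(\abs{\log\eps})$, whereas the paper simply appeals to the rescaling $\varphi_{\theta,\eps} = \eps^{1/2}\varphi_{\theta\eps,1}(\eps^{-1}\cdot)$. In fact both arguments are more than is needed: the first integral you already have gives $\varphi_{\theta,\eps}' \geq \sqrt{\theta}$, hence $t_\eps \leq \eps^{1/2}/\sqrt{\theta}$ directly, so the integral estimate near $\psi=1$ (which you flag as the main obstacle) can be bypassed entirely.
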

\begin{proof}
	Thanks to the initial conditions we have that
	\begin{equation}
		\label{equation:1d:theta}
		\varphi_{\theta,\eps}'= \sqrt{\frac{1}{\eps}W\left(\frac{\varphi_{\theta,\eps}}{\eps^{1/2}}\right) + \theta}.
	\end{equation}
	Now noticing that the $\theta$ in \eqref{equation:1d:theta} ensures $\varphi_{\theta,\eps}$ is always increasing with gradient at least $\theta$, there must exist a $t_{\eps}$ such that $\varphi_{\theta,\eps}(\pm t_{\eps}) = \pm \eps^{1/2}$ and so $W\left(\frac{\varphi_{\theta,\eps}(s)}{\eps^{1/2}}\right) = 0$ for $\abs{s} \geq t_{\eps}$. The behaviour of $t_{\eps}$ is then deduced from the fact that $\varphi_{\theta,\eps} = \eps^{1/2}\varphi_{\theta\eps,1}(\eps^{-1}\cdot)$ as in the proof of Lemma \ref{lemma:1d:properties}. Since $t_{\eps} \to 0$ we have the uniform convergence of $\varphi_{\theta,\eps}(s)$ to $\theta s$.  
\end{proof}

We now give the proof of Theorem \ref{theorem:gamma:convergence} in the following two Lemmas. 

\begin{lemma}
	\label{lemma:gamma:liminf}
	Suppose that $\eps_k \to 0$ and that $u_{\eps_k}\to u$ in $L^2(A)$ and $\tilde{H}\left(\frac{u_{\eps_k}}{\eps_k^{1/2}}\right) \to \chi_{\Omega}-\chi_{\Omega^c}$ in $L^1(A)$ as $k \to \infty$ for some measurable set $\Omega$. Then 
	\begin{equation*}
		\label{equation:liminf:eq}
		\liminf_{k\to\infty} E_{\eps_k}(u_{\eps_k},A) \geq E_A(u,\Omega). 
	\end{equation*}
\end{lemma}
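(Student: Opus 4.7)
The plan is to combine the Modica--Mortola AM--GM inequality with a truncation argument that isolates the Dirichlet contribution from the region where the potential $W$ vanishes, and then to invoke $BV$ lower semicontinuity for the perimeter contribution. One may assume $\liminf_k E_{\eps_k}(u_{\eps_k}, A) < \infty$ and pass to a subsequence realising the liminf. The Dirichlet bound in $E_{\eps_k}$ then ensures that $\{u_{\eps_k}\}$ is bounded in $H^1(A)$, and extracting a further subsequence yields $u_{\eps_k} \rightharpoonup u$ weakly in $H^1(A)$.

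The pointwise AM--GM estimate
\[
|\nabla u_{\eps_k}|^2 + \tfrac{1}{\eps_k}W\!\left(\tfrac{u_{\eps_k}}{\eps_k^{1/2}}\right) \;\geq\; 2\,\tfrac{|\nabla u_{\eps_k}|}{\eps_k^{1/2}}\sqrt{W\!\left(\tfrac{u_{\eps_k}}{\eps_k^{1/2}}\right)} \;=\; \left|\nabla H\!\left(\tfrac{u_{\eps_k}}{\eps_k^{1/2}}\right)\right|,
\]
combined with the fact that $W(u_{\eps_k}/\eps_k^{1/2}) = 0$ \emph{and} $\nabla H(u_{\eps_k}/\eps_k^{1/2}) = 0$ on the set $\{|u_{\eps_k}| \geq \eps_k^{1/2}\}$ (because $W$ is compactly supported in $[-1,1]$), splits the energy as
\[
E_{\eps_k}(u_{\eps_k},A) \;\geq\; \int_{A \cap \{|u_{\eps_k}| \geq \eps_k^{1/2}\}} |\nabla u_{\eps_k}|^2 \;+\; \int_A \left|\nabla H\!\left(\tfrac{u_{\eps_k}}{\eps_k^{1/2}}\right)\right|.
\]

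To handle the first (Dirichlet) term I would introduce the truncation $\tilde u_{\eps_k} := (u_{\eps_k} - \eps_k^{1/2})_+ - (-u_{\eps_k} - \eps_k^{1/2})_+$, which satisfies $|\tilde u_{\eps_k} - u_{\eps_k}| \leq \eps_k^{1/2}$ pointwise and, by the chain rule for Lipschitz compositions, $\nabla \tilde u_{\eps_k} = \chi_{\{|u_{\eps_k}| \geq \eps_k^{1/2}\}}\,\nabla u_{\eps_k}$. Hence $\tilde u_{\eps_k} \to u$ in $L^2(A)$ and is bounded in $H^1(A)$, so by uniqueness of $L^2$ limits $\tilde u_{\eps_k} \rightharpoonup u$ weakly in $H^1(A)$, and weak lower semicontinuity of the Dirichlet energy gives $\int_A |\nabla u|^2 \leq \liminf_k \int_{\{|u_{\eps_k}| \geq \eps_k^{1/2}\}} |\nabla u_{\eps_k}|^2$. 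For the second term, the identity $H = (c_0/2)\tilde H$ together with the assumed convergence $\tilde H(u_{\eps_k}/\eps_k^{1/2}) \to \chi_\Omega - \chi_{\Omega^c}$ in $L^1(A)$ and the usual lower semicontinuity of the $BV$ seminorm yields $\liminf_k \int_A |\nabla H(u_{\eps_k}/\eps_k^{1/2})| \geq (c_0/2)\,|D(\chi_\Omega - \chi_{\Omega^c})|(A) = c_0\,Per(\Omega,A)$. Adding the two bounds via $\liminf(a_k+b_k) \geq \liminf a_k + \liminf b_k$ finishes the proof.

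The main technical point is the truncation step: the Modica--Mortola inequality by itself converts \emph{all} of the Dirichlet energy into perimeter and so cannot yield $\int_A |\nabla u|^2$ in addition. Isolating the set $\{|u_{\eps_k}| \geq \eps_k^{1/2}\}$, on which $W \equiv 0$ and the Dirichlet energy is ``unused'' by the transition term, is what makes the separation possible, and verifying that the $\eps_k$-dependent truncations $\tilde u_{\eps_k}$ really converge weakly in $H^1$ to the same limit $u$ is the one place where some care is required.
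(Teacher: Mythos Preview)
Your proof is correct and follows essentially the same approach as the paper: split off the region $\{|u_{\eps_k}|\geq \eps_k^{1/2}\}$ where $W$ vanishes, apply the Modica--Mortola AM--GM inequality on the transition layer to recover the perimeter via $BV$ lower semicontinuity, and use a truncation to pass the Dirichlet term to the limit. The only cosmetic difference is that the paper uses the absolute-value truncation $(|u_{\eps_k}|-\eps_k^{1/2})_+$ (which converges to $|u|$, and then $\int|\nabla |u||^2=\int|\nabla u|^2$), whereas you use the sign-preserving truncation, which converges directly to $u$; your reduction to a subsequence realising the liminf and the explicit verification of weak $H^1$ convergence are slightly more careful than the paper's one-line appeal to ``lower semicontinuity with respect to $L^2$ convergence''.
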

\begin{proof}
	We first split
	\begin{align*}
		E_{\eps_k}(u_{\eps_k},A)
		&=\int_{A}\left( \abs{\nabla u_{\eps_k}}^2 + \frac{1}{\eps_k}W\left(\frac{u_{\eps_k}}{\eps_k^{1/2}}\right)\right)\\
		&=\int_{A\cap\{-\eps_k^{1/2} < u_{\eps_k} < \eps_k^{1/2}\}} \left( \abs{\nabla u_{\eps_k}}^2 + \frac{1}{\eps_k}W\left(\frac{u_{\eps_k}}{\eps_k^{1/2}}\right)\right)
		+ \int_{A\cap\{\abs{u_k} \geq \eps_k^{1/2}\}}\left( \abs{\nabla u_{\eps_k}}^2 + \frac{1}{\eps_k}W\left(\frac{u_{\eps_k}}{\eps_k^{1/2}}\right)\right)\\
		&= \int_{A\cap\{-\eps_k^{1/2} < u_{\eps_k} < \eps_k^{1/2}\}} \left( \abs{\nabla u_{\eps_k}}^2 + \frac{1}{\eps_k}W\left(\frac{u_{\eps_k}}{\eps_k^{1/2}}\right)\right)
		+ \int_{A\cap\{\abs{u_k} \geq \eps_k^{1/2}\}}\abs{\nabla u_{\eps_k}}^2,
	\end{align*}
	since $W(s)=0$ for $\abs{s}\geq 1$. Now, by Young's inequality we have
	\begin{align*}
		\int_{A \cap \{-\eps_k^{1/2} < u_{\eps_k} <\eps_k^{1/2}\}} \left(\abs{\nabla u_{\eps_k}}^2 + \frac{1}{\eps_k}W\left(\frac{u_{\eps_k}}{\eps_k^{1/2}}\right)\right)
		&\geq \int_{A\cap\{-\eps_k^{1/2} < u_{\eps_k} < \eps_k^{1/2}\}} \left(2\abs{\nabla u_{\eps_k}}\frac{1}{\eps_k^{1/2}}\sqrt{W\left(\frac{u_{\eps_k}}{\eps_k^{1/2}}\right)}\right)\\
		&= \int_{A\cap\{\eps_k^{1/2} < u_{\eps_k} < \eps_k^{1/2}\}}\abs{\nabla H\left(\frac{u_{\eps_k}}{\eps_k^{1/2}}\right)}\\
		&= \frac{c_0}{2}  \int_{A}\abs{\nabla \tilde{H}\left(\frac{u_{\eps_k}}{\eps_k^{1/2}}\right)},
	\end{align*}
	where in the last line we have used the fact that $\tilde{H}(t)$ is constant for $\abs{t} \geq 1$. Now using the lower semicontinuity of the BV-norm with respect to $L^1$ convergence we have that 
	\begin{align*}
		\liminf_{k\to\infty} \int_{A\cap \{-\eps_k^{1/2} \leq u_{\eps_k} \leq \eps_k^{1/2}\}} \left(\abs{\nabla u_{\eps_k}}^2 + \frac{1}{\eps_k}W\left(\frac{u_{\eps_k}}{\eps_k^{1/2}}\right)\right)
		&\geq \frac{c_0}{2}  \int_A \abs{\nabla(\chi_{\Omega}-\chi_{\Omega^c})} \\
		&= c_0 Per\left( \Omega, A \right).
	\end{align*}
	Since $\int_{A\cap \{\abs{u_{\eps_k}} \geq \eps_k^{1/2}\}} \abs{\nabla u_{\eps_k}}^2 = \int_{A} \abs{\nabla (\abs{u_{\eps_k}}-\eps_k^{1/2})_{+}}^2$ and the Dirichlet energy is lower semi-continuous with respect to the $L^2$ convergence, this establishes the Lemma.
\end{proof}

\begin{lemma}
	\label{lemma:gamma:limsup}
	Given an admissible pair $(u,\Omega) \in \mathscr{A}(A)$, where $\overline{A}$ is a Lipschitz domain, and any $\eps_k\to0$, there exists a sequence $\{u_{\eps_k}\}_{k\in\N}\subset H^1(A)$ such that $u_{\eps_k} \to u$ in $L^2(A)$ and $\tilde{H}\left(\frac{u_{\eps_k}}{\eps_k^{1/2}}\right) \to \chi_{\Omega}-\chi_{\Omega^c}$ in $L^1(A)$ as $k \to \infty$. Moreover
	\begin{equation}
		\label{equation:limsup:eq}
		\limsup_{k\to\infty} E_{\eps_k}(u_{\eps_k},A) \leq E_A(u,\Omega).
	\end{equation}
\end{lemma}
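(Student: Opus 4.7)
The plan is to build a recovery sequence of the form $u_\eps(x) = u^j(x) + \varphi_{\theta_\eps,\eps}(d_j(x))$ for smooth approximants $(u^j,\Omega_j)$ of $(u,\Omega)$, where $d_j$ is the signed distance to $\partial\Omega_j$ (positive inside $\Omega_j$) and $\varphi_{\theta_\eps,\eps}$ is the one-dimensional profile of Lemma~\ref{lemma:1D:properties:theta} with $\theta_\eps\to 0$ tuned to kill the error terms. The first step is the reduction: approximate $(u,\Omega)$ by admissible pairs with $\Omega_j$ of smooth boundary, $\chi_{\Omega_j}\to\chi_\Omega$ in $L^1(A)$, $Per(\Omega_j,A)\to Per(\Omega,A)$, and $u^j\to u$ in $H^1(A)$, while additionally arranging that $u^j\equiv 0$ in an open tubular neighborhood $\{|d_j|<\delta_j\}$ of $\partial\Omega_j$ with $\delta_j\to 0$.

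Given such a reduction, the $L^2$-convergence $u^j_\eps\to u^j$ is immediate from the uniform smallness of $\varphi_{\theta_\eps,\eps}$ stated in Lemma~\ref{lemma:1D:properties:theta}. Outside the transition set $\{|d_j|\leq t_\eps\}$, the same lemma gives $|\varphi_{\theta_\eps,\eps}(d_j)|\geq\eps^{1/2}$ with sign matching that of $d_j$; combined with the sign compatibility of $u^j$, this yields $u^j_\eps/\eps^{1/2}\geq 1$ on $\Omega_j\cap\{d_j\geq t_\eps\}$ and $\leq -1$ on the other side, so $\tilde H(u^j_\eps/\eps^{1/2})\to\chi_{\Omega_j}-\chi_{\Omega_j^c}$ in $L^1(A)$.

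For the energy, I would expand
\[
|\nabla u^j_\eps|^2 = |\nabla u^j|^2 + 2\,\varphi_{\theta_\eps,\eps}'(d_j)\,\nabla u^j\cdot\nabla d_j + (\varphi_{\theta_\eps,\eps}'(d_j))^2.
\]
Since $u^j\equiv 0$ on $\{|d_j|\leq\delta_j\}\supset\{|d_j|\leq t_\eps\}$ once $\eps$ is small, the cross term is supported where $|\varphi_{\theta_\eps,\eps}'|$ is a small constant provided by Lemma~\ref{lemma:1D:properties:theta}, hence is $o(1)$ as $\theta_\eps\to 0$. On the transition set, the coarea formula combined with \eqref{equation:1d:theta} reduces the contribution of $(\varphi')^2+\eps^{-1}W(\varphi/\eps^{1/2})$ to $\int_{-t_\eps}^{t_\eps}2(\varphi_{\theta_\eps,\eps}'(s))^2\,\mathcal{H}^{n-1}(\{d_j=s\}\cap A)\,ds + O(\theta_\eps)$, and the one-dimensional integral converges to $c_0$ via the substitution $t=\varphi_{\theta_\eps,\eps}/\eps^{1/2}$ and the definitions of $H$ and $c_0$, while the surface measure converges to $Per(\Omega_j,A)$ by smoothness of $\partial\Omega_j$. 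Outside the transition region the potential vanishes and the remaining Dirichlet contribution is $\int_A|\nabla u^j|^2 + O(\theta_\eps|A|)$. Summing gives $\limsup_{\eps\to 0}E_\eps(u^j_\eps,A)\leq\int_A|\nabla u^j|^2 + c_0\,Per(\Omega_j,A)$, and a standard diagonal argument in $j$ (using the convergences in Step 1) produces a single recovery sequence $u_{\eps_k}$ along the prescribed $\eps_k\to 0$ satisfying the required $L^2$ and $L^1$ convergences together with \eqref{equation:limsup:eq}.

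I expect the main obstacle to be the reduction in the first step: producing the smooth approximation $\Omega_j$ of $\Omega$ with convergent perimeters \emph{together with} a sign-compatible $H^1$-approximation $u^j$ of $u$ that vanishes in a thickening of $\partial\Omega_j$. The key ingredient, which requires a short argument via the chain rule in $H^1$ applied to $u^+$ and $u^-$, is that the sign hypothesis $u|_\Omega\geq 0$, $u|_{\Omega^c}\leq 0$ forces the traces of $u^\pm$ on the reduced boundary $\partial^*\Omega$ to vanish; consequently, multiplying $u$ by a distance-based cutoff that vanishes in a $\delta_j$-tube around $\partial\Omega_j$ is asymptotically cost-free in $H^1$, provided $\delta_j$ shrinks slowly enough relative to $\norm{\chi_{\Omega_j}-\chi_\Omega}_{L^1}$. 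Everything else reduces to the standard Modica-Mortola bookkeeping once this reduction is in place.
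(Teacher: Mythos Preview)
Your construction is genuinely different from the paper's. The paper does \emph{not} add the one-dimensional profile to an approximation of $u$; instead it reduces to continuous $u$, approximates only $\Omega$ by smooth $\Omega_j$, and then \emph{glues} via
\[
u_{k,j}=\max\{\varphi_{\eps_k}(s_j),(u-\overline\delta_{k,j})_+\}\chi_{\{s_j\geq 0\}}+\min\{\varphi_{\eps_k}(s_j),-(u-\underline\delta_{k,j})_-\}\chi_{\{s_j<0\}},
\]
using the profile $\varphi_{\eps}$ of Lemma~\ref{lemma:1d:properties} (not $\varphi_{\theta,\eps}$) and the constants $\overline\delta_{k,j}=\sup_{\{0\leq s_j\leq t_{\eps_k}\}}u$, $\underline\delta_{k,j}=\inf_{\{-t_{\eps_k}\leq s_j\leq 0\}}u$. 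The max/min automatically enforces the correct sign relative to $\Omega_j$, and the shifts $\overline\delta_{k,j},\underline\delta_{k,j}\to 0$ replace your cutoff argument entirely; no $H^1$-approximation $u^j$ of $u$ is needed.

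Your additive route is in principle workable, but the reduction you flag as the ``main obstacle'' has a real gap. Multiplying $u$ by a cutoff vanishing on $\{|d_j|<\delta_j\}$ produces $u^j=\eta_j u$, which is sign-compatible with $\Omega$, not with $\Omega_j$. On the set $(\Omega_j\Delta\Omega)\cap\{|d_j|>\delta_j\}$ the sign is wrong, and there $u^j_\eps=u+\varphi_{\theta_\eps,\eps}(d_j)$ can lie in $(-\eps^{1/2},\eps^{1/2})$, so the potential contributes up to $\eps^{-1}|\Omega_j\Delta\Omega|$. This is not killed by ``$\delta_j$ shrinking slowly relative to $\|\chi_{\Omega_j}-\chi_\Omega\|_{L^1}$'', which is a measure condition, not a Hausdorff one. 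You would need either to choose $j=j(k)$ in the diagonal so that $\eps_k^{-1}|\Omega_{j(k)}\Delta\Omega|\to 0$, or to upgrade the approximation so that $\Omega_j\Delta\Omega\subset\{|d_j|<\delta_j\}$; neither is in your sketch. Second, the claim that the cutoff is ``cost-free in $H^1$'' needs $\delta_j^{-2}\int_{\{|d_j|<2\delta_j\}}u^2\to 0$. Your justification---vanishing traces of $u^\pm$ on $\partial^*\Omega$---gives information on $\partial^*\Omega$, not on $\partial\Omega_j$, so it does not directly yield a Hardy-type bound in the $\partial\Omega_j$-tube. The paper handles the analogous issue by first reducing to continuous $u$ and then invoking $\sup_{\partial\Omega_j\cap A}|u|\to 0$ (its equation~\eqref{equation:sup:j}); without a comparable step your cutoff estimate is incomplete.
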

\begin{proof}
	We will prove the Lemma under the additional assumption that $u$ is continuous on $\overline{A}$, the general case follows by approximation. We approximate $\Omega$ with a sequence of smooth sets $\Omega_j$ such that 
	\begin{equation}
		\label{eqn:convergence:perimeters}
		Per(\Omega_j,A) \to Per(\Omega,A) ,
	\end{equation}
	\begin{equation}
		\label{eqn:convergence:l1}
		\chi_{\Omega_j} \to \chi_{\Omega} \text{ in } L^1(A), 
	\end{equation}
	and
	\begin{equation}
		\label{eqn:convergence:difference}
		\abs{(\Omega_j \cap A) \Delta \Omega} \to 0,
	\end{equation}
	as $j \to \infty$, see for instance \cite{modica1987}. We will let $s_{j}(x)$ denote the signed distance to the set $\p\Omega_j \cap B_1$ with the convention that $s_j(x) >0$ if $x \in \Omega_j$.
	
	Fix a sequence $(\gamma_l)_{l\in\N}$ such that $\gamma_l \to 0$ as $ l \to \infty$. For each $l \in \N$ fix $j=j(l) \in \N$ large so that
	\begin{equation}
		\label{equation:perimeters:l}
		Per(\Omega_j,A) \leq Per(\Omega,A) + \gamma_{l},
	\end{equation}
	\begin{equation}
		\label{equation:l1:l}
		\norm{\chi_{\Omega_j} - \chi_{\Omega}}_{L^1(A)} \leq \gamma_{l},
	\end{equation}
	and
	\begin{equation}
		\label{equation:sup:j}
		\sup_{\p \Omega_j \cap A} \abs{u} \leq \gamma_l.
	\end{equation}
	Here \eqref{equation:perimeters:l} and \eqref{equation:l1:l} follow from \eqref{eqn:convergence:perimeters} and \eqref{eqn:convergence:l1} respectively, while \eqref{equation:sup:j} holds thanks to \eqref{eqn:convergence:difference} and the fact that $u=0$ on $ \p \Omega$ (since $u$ is continuous on $\overline{A}$). Now fix $\kappa = \kappa(l) > 0$ be small enough so that 
	\begin{equation}
		\label{equation:kP}
		\kappa Per(\Omega_j,A) \leq \gamma_l,
	\end{equation}
	and
	\begin{equation*}
		\sup_{ t_{\eps_k} \leq \abs{s} \leq \diam(A)} \frac{1}{\eps_k}W\left(\frac{\varphi_{\eps_k}(s)}{\eps_k^{1/2}}\right) \leq \frac{\gamma_l}{2\abs{A}},
	\end{equation*}
	where $\varphi_{\eps_k}$ and $t_{\eps_k}=t_{\eps_k}(\kappa)$ are defined in Lemma \ref{lemma:1d:properties}.
	 Defining the constants 
	\begin{equation*}
		\overline{\delta}_{k,j}=\sup_{0\leq s_j(x)\leq t_{\eps_k}} u(x), \hspace{2mm} \underline{\delta}_{k,j}=\inf_{-t_{\eps_k}\leq s_j(x)\leq0}u(x),
	\end{equation*}
	we let
	\begin{align*}
		u_{k,j}(x) &=\max\{\varphi_{\eps_k}(s_{j}(x)),(u-\overline{\delta}_{k,j})_+\}\chi_{\{s_j(x)\geq0\}} \\ &\hspace{3mm}+ \min\{\varphi_{\eps_k}(s_{j}(x)),-(u-\underline{\delta}_{k,j})_-\}\chi_{\{s_j(x)<0\}}.
	\end{align*}	
	Since $u$ is continuous on $\overline{A}$ the constants $\overline{\delta}_{k,j}$ and $\underline{\delta}_{k,j}$ are finite, and since $\p \Omega_j$ is smooth, we have that $u_{k,j}\in H^1(A)$ for each $k,j \in \N$. At this point we will fix an $l \in \N$, which in turn fixes $j=j(l)$ and $\kappa = \kappa(l)$ as above and we will find a suitable $k$ depending on $l$ such that conclusions of the Lemma hold. \\
	
	\textit{Step 1: $L^2$ convergence of $u_{k,j}$ to $u$.}\\
	Observe that $\lim_{k\to\infty} \overline{\delta}_{k,j} = \sup_{\p \Omega_j\cap \overline{A}} u =:\overline{\delta}_j$ and $\lim_{k\to\infty} \underline{\delta}_{k,j} = \inf_{\p \Omega_j\cap \overline{A}} u=:\underline{\delta}_j$.  Hence, using \eqref{equation:sup:j} and the fact that $\varphi_{\eps_k} \to 0$ uniformly as $k \to \infty$, we have for $k \geq k_0(l)$ that
	\begin{equation}
		\label{equation:intermediate:l2}
		\norm{u_{k,j} - u}_{L^2(A)} \leq C\gamma_l.
	\end{equation}
	\\
	\textit{Step 2: Establishing \eqref{equation:limsup:eq}.}\\
	Notice that if $\abs{u_{k,j}} \leq (1-\kappa)\eps_k^{1/2}$ we have that $u_{k,j}(x)=\varphi_{\eps_k}(s_j(x))$ so that by \eqref{equation:1d:property} there holds
	\begin{align*}
		\abs{\nabla u_{k,j}} = \abs{\varphi_{\eps_k}'(s_{j}(x))}\abs{\nabla s_{j}(x)} = \frac{1}{\eps_k^{1/2}}\sqrt{W\left(\frac{\varphi_{\eps_k}(s_{j}(x))}{\eps_k^{1/2}}\right)} \text{ on } A\cap\{\abs{u_{k,j}} \leq (1-\kappa)\eps_k^{1/2}\}.
	\end{align*}
	Therefore, using the coarea formula, we find that
	\begin{align*}
		\int_{A\cap\{\abs{u_{k,j}} \leq (1-\kappa)\eps_k^{1/2}\}} \left(\abs{\nabla u_{k,j}}^2 + \frac1{\eps_k}W\left(\frac{u_{k,j}}{\eps_k^{1/2}}\right)\right)
		&= \int_{A\cap\{\abs{u_{k,j}} \leq (1-\kappa)\eps_k^{1/2}\}} \frac{2}{\eps_k^{1/2}}\sqrt{W\left(\frac{\varphi_{\eps_k}(s_{j}(x))}{\eps_k^{1/2}}\right)}\abs{\nabla \varphi_{\eps_k}(s_{j}(x))}\\
		&= \int_{A\cap\{\abs{u_{k,j}} \leq (1-\kappa)\eps_k^{1/2}\}} \abs{\nabla H\left(\frac{\varphi_{\eps_k}(s_j(x))}{\eps_k^{1/2}}\right)}\\
		&= \int_{-1+\kappa}^{1-\kappa} 2\sqrt{W(s)} Per\left(\left\{\frac{\varphi_{\eps_k}(s_{j}(x))}{\eps_k^{1/2}}>s\right\}, A\right).
	\end{align*}
	Since $u_{k,j} \geq \varphi_{\eps_k}(s_j(x))$ on $\{s_j \geq 0\}$ and $u_{k,j} \leq \varphi_{\eps_k}(s_j(x))$ on $\{s_j \leq 0\}$, there holds that $W(\frac{u_{k,j}}{\eps_k^{1/2}})\leq W(\frac{ \varphi_{\eps_k}(s_j(x))}{\eps_k^{1/2}})$. Therefore we have that
	\begin{align*}
		\int_{A} \left(\abs{\nabla u_{k,j}}^2 + \frac1{\eps_k}W\left(\frac{u_{k,j}}{\eps_k^{1/2}}\right)\right)
		&=\int_{-1+\kappa}^{1-\kappa} 2\sqrt{W(s)} Per\left(\left\{\frac{\varphi_{\eps_k}(s_{j}(x))}{\eps_k^{1/2}}>s\right\}, A\right)\\
		&\hspace{2mm}+\int_{A\cap\{\abs{u_{k,j}} \geq (1-\kappa) \eps_k^{1/2}\}} \abs{\nabla u_{k,j}}^2 + \int_{A\cap\{\abs{u_{k,j}} \geq (1-\kappa)\eps_k^{1/2}\}} \frac{1}{\eps_k}W\left(\frac{u_{k,j}}{\eps_k^{1/2}}\right) \\
		&\leq c_0Per(\Omega_j,A) + C\kappa Per(\Omega_j,A)+ \int_{A\cap\{\abs{u_{k,j}} \geq (1-\kappa) \eps_k^{1/2}\}} \abs{\nabla u_{k,j}}^2 \\
		&\hspace{2mm}+ \int_{A\cap\{t_{\eps_k} \leq s_j(x) \leq \diam(A)\}} \frac{1}{\eps_k}W\left(\frac{\varphi_{\eps_{k}}(s_j(x))}{\eps_k^{1/2}}\right)  + o(1),
	\end{align*}
	as $k \to\infty $. 
	Using \eqref{equation:kP}, the fact that $\nabla (u-\overline{\delta}_{k,j})_+ \to \nabla (u-\overline{\delta}_j)_+$ and $\nabla (u-\underline{\delta}_{k,j})_- \to \nabla (u-\underline{\delta}_j)_-$ as $k \to \infty$ in $L^2$, as well as the estimate 
	\begin{equation*}
		\int_{A\cap\{t_{\eps_k} \leq s_j(x) \leq \diam(A)\}} \frac{1}{\eps_k}W\left(\frac{\varphi_{\eps_{k}}(s_j(x))}{\eps_k^{1/2}}\right) \leq \sup_{ t_{\eps_{k}} \leq \abs{s} \leq \diam(A) } \frac{1}{\eps_{k}}W\left(\frac{\varphi_{\eps_k}(s)}{\eps_k^{1/2}}\right) \abs{A} \leq \frac{\gamma_l}{2},
	\end{equation*}
	we find that
	\begin{align*}
		\int_{A} \left(\abs{\nabla u_{k,j}}^2 + \frac1{\eps_k}W\left(\frac{u_{k,j}}{\eps_k^{1/2}}\right)\right)
		&\leq  c_0Per(\Omega_j,A) + \int_{A\backslash\{\underline{\delta}_j \leq u \leq \overline{\delta}_j\}} \abs{\nabla u}^2 + C\gamma_{l}+ o(1),
	\end{align*}
	as $k \to \infty$. Therefore for $k \geq k_1(l)\geq k_0(l)$ we have 
	\begin{align}
		\label{equation:intermediate:bound}
		\int_{A} \left(\abs{\nabla u_{k,j}}^2 + \frac1{\eps_k}W\left(\frac{u_{k,j}}{\eps_k^{1/2}}\right)\right)&\leq  c_0Per(\Omega,A) + \int_{A} \abs{\nabla u}^2 + C\gamma_l.
	\end{align}
	
	\textit{Step 3: $L^1$ convergence of $\tilde{H}\left(\frac{u_{k,j}}{\eps_k^{1/2}}\right)$ to $\chi_{\Omega}-\chi_{\Omega^c}$.}\\
	Observe by Young's inequality that
	\begin{align*}
		\int_{A} \abs{\nabla \tilde{H}\left(\frac{u_{k,j}}{\eps_k^{1/2}}\right)} =\frac{2}{c_0} \int_{A} 2\sqrt{W\left(\frac{u_{k,j}}{\eps_k^{1/2}}\right)} \abs{\nabla u_{k,j}}\eps_k^{-1/2} \leq \frac{2}{c_0} E_{\eps_k}(u_{k,j},A),
	\end{align*}
	so that by \eqref{equation:intermediate:bound} and the fact that $\abs{\tilde{H}}\leq 1$ we have that $\tilde{H}\left(\frac{u_{k,j}}{\eps_k^{1/2}}\right)$ is bounded in $BV(A)$ for $k\geq k_1(l)$. Consequently, up to taking a subsequence in $k$, $\tilde{H}\left(\frac{u_{k,j}}{\eps_k^{1/2}}\right) \to g_j$ in $L^1(A)$ as $k \to \infty$ for some $g_j \in L^1(A)$. Now we claim that 
	\begin{equation}
		\label{equation:gj:sets}
		g_j = \chi_{\Omega_j} - \chi_{\Omega_j^c}
	\end{equation}
	Indeed, let $\delta > 0$ small and notice that
	\begin{align*}
		\abs{\left\{-1+2\delta \leq g_j \leq 1-2\delta\right\}\cap A}
		&\leq \abs{\left\{-1+\delta \leq 	\tilde{H}\left(\frac{u_{k,j}}{\eps_k^{1/2}}\right) \leq 1-\delta\right\}\cap A} + o(1)\\
		&\leq \abs{\left\{\tilde{H}^{-1}\left(-1+\delta\right) \leq  \frac{u_{k,j}}{\eps_k^{1/2}}\leq \tilde{H}^{-1}\left(1-\delta\right)\right\}\cap A}+o(1)\\
		&\leq C(\delta,W) \int_{A} W\left(\frac{u_{k,j}}{\eps_k^{1/2}}\right) +o(1),
	\end{align*}
	as $k \to \infty$. Here we have used the fact that for $\delta$ small we have $C(\delta, W) W(t)\geq 1$ on the set $\left\{\tilde{H}^{-1}(-1+\delta) \leq  t \leq \tilde{H}^{-1}(1-\delta)\right\}$, where $C(\delta,W)$ is a large constant depending only on $\delta$ and $W$. Now using \eqref{equation:intermediate:bound} once more we have that
	\begin{equation*}
		\int_{A}\frac{1}{\eps_k}W\left(\frac{u_{k,j}}{\eps_k^{1/2}}\right) \leq C,
	\end{equation*}
	and so
	\begin{align*}
		\abs{\left\{-1+2\delta \leq g \leq 1-2\delta\right\}\cap B_R} \leq C\eps_k + o(1)\to 0 \text{	as $k \to \infty$,}
	\end{align*}
	which establishes that $g_j \in \left\{1,-1\right\}$. This however establishes \eqref{equation:gj:sets} since	
	\begin{equation*}
		\tilde{H}\left(\frac{u_{k,j}}{\eps_k^{1/2}}\right) \geq 0 \text{ on } \Omega_j,
	\end{equation*}
	and 
	\begin{equation*}
		\tilde{H}\left(\frac{u_{k,j}}{\eps_k^{1/2}}\right) \leq 0 \text{ on } \Omega_j^c.
	\end{equation*}
	Therefore, taking $k\geq k_2(l)\geq k_1(l)$ we have that
	\begin{equation}
		\label{equation:intermediate:H}
		\norm{\tilde{H}\left(\frac{u_{k,j}}{\eps_k^{1/2}}\right)-\chi_{\Omega}+\chi_{\Omega^c}}_{L^1(A)} \leq C\gamma_l.
	\end{equation}
	
	\textit{Step 4: Constructing the sequence.}
	We will now construct a sequence $u_{\eps_{k_l}}$ satisfying the conclusions of the Lemma. For each $l \in \N$ we take $j_l := j(l)$ and $k_l \geq k_2(l)$ and we set $u_{\eps_{k_l}} = u_{k_l,j_l}$. Notice that \eqref{equation:intermediate:l2}, \eqref{equation:intermediate:bound} and \eqref{equation:intermediate:H} hold for $u_{\eps_{k_l}}$. From these we conclude that $u_{\eps_{k_l}} \to u$ in $L^2(A)$ and $\tilde{H}\left(\frac{u_{\eps_{k_l}}}{\eps_{k_l}^{1/2}}\right) \to \chi_{\Omega}-\chi_{\Omega^c}$ in $L^1(A)$ as $l \to \infty$, and  \eqref{equation:limsup:eq} holds.
\end{proof}

\section{Proof of Theorem \ref{theorem:main:existence}}\label{section:prooffinal}
We begin with the following interpolation result which is an adaptation of \cite[Proposition 5.1]{savinAltPhillips} for our problem.
\begin{proposition}
	\label{proposition:interpolation}
	Let $u_{\eps_k}$ and $v_{\eps_k}$ be sequences in $\tilde{\mathscr{A}}_M(B_1)$  with $\eps_{k}\to 0$. Assume that for some $\rho \in (1/2,1)$ and each $\delta > 0$ small that
	\begin{equation}
		\label{eqn:energybounds}
		E_{\eps_k}(u_{\eps_k}, B_{\rho+\delta}) + E_{\eps_k}(v_{\eps_k},B_{\rho+\delta}) \leq C,
	\end{equation}
	uniformly in $k$, and that
	\begin{equation}
		\label{equation:convergence}
		\norm{u_{\eps_k}-v_{\eps_k}}_{L^2(B_1 \backslash \overline{B}_{\rho})}  \to 0,
	\end{equation}
	and
	\begin{equation}
		\label{equation:convergence:H}
		\norm{\tilde{H}\left(\frac{u_{\eps_k}}{\eps_k^{1/2}}\right)-\tilde{H}\left(\frac{v_{\eps_k}}{\eps_k^{1/2}}\right)}_{L^1(B_{1}\backslash \overline{B}_{\rho})}  \to 0,
	\end{equation}
	as $k\to\infty$.
	Then for any $\gamma >0$ there exists a $\delta > 0$ and a sequence $w_{\eps_k} \in \tilde{\mathscr{A}}_{M}(B_1)$ such that
	\begin{equation*}
		w_{\eps_k} =
		\begin{cases}
			v_{\eps_k} & \text{ in } B_{\rho}, \\
			u_{\eps_k} & \text{in } B_{1} \backslash \overline{B}_{\rho+\delta},
		\end{cases}
	\end{equation*}
	and for $k$ large enough there holds that
	\begin{equation}
	\label{equation:interpolation:energy}
	E_{\eps_k}(w_{\eps_k},B_1) \leq E_{\eps_k}(v_{\eps_k},B_{\rho}) + E_{\eps_k}(u_{\eps_k},B_1\backslash \overline{B}_{\rho+\delta}) + \gamma.
	\end{equation}
\end{proposition}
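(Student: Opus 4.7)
The plan is to construct $w_{\eps_k}$ as a radial cutoff interpolation between $v_{\eps_k}$ and $u_{\eps_k}$, localized in a thin sub-annulus of $B_{\rho+\delta}\setminus \overline{B}_\rho$ chosen by pigeonhole and Fubini. I partition $(\rho,\rho+\delta)$ into $N$ subintervals of width $\delta/N$, with $N=N(\gamma,C)$ large. By the uniform energy bound \eqref{eqn:energybounds} and pigeonhole, there is an index $i_k$ whose sub-annulus $A^*_k=B_{r^+_k}\setminus B_{r^-_k}$ satisfies
\begin{equation*}
E_{\eps_k}(u_{\eps_k},A^*_k)+E_{\eps_k}(v_{\eps_k},A^*_k)\le \tfrac{2C}{N},
\end{equation*}
which is $\le\gamma/4$ for $N$ large; a further Fubini refinement within $A^*_k$ pins down specific $r^\pm_k$ with controlled spherical trace integrals.

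I then define $w_{\eps_k}=v_{\eps_k}$ on $B_{r^-_k}$, $w_{\eps_k}=u_{\eps_k}$ on $B_1\setminus\overline{B}_{r^+_k}$, and on $A^*_k$ the radial convex combination
\begin{equation*}
w_{\eps_k}(x)=(1-\eta_k(|x|))\,v_{\eps_k}(x)+\eta_k(|x|)\,u_{\eps_k}(x),
\end{equation*}
where $\eta_k$ is a smooth radial cutoff with $\eta_k(r^-_k)=0$, $\eta_k(r^+_k)=1$, and $|\eta_k'|\le CN/\delta$. The convex combination preserves $|w_{\eps_k}|\le M$, so $w_{\eps_k}\in\tilde{\mathscr{A}}_M(B_1)$; the prescribed boundary conditions $w_{\eps_k}=v_{\eps_k}$ on $B_\rho$ and $w_{\eps_k}=u_{\eps_k}$ on $B_1\setminus\overline{B}_{\rho+\delta}$ hold since $r^-_k\ge\rho$ and $r^+_k\le\rho+\delta$.

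The energy estimate on $A^*_k$ splits into Dirichlet and potential pieces. Convexity of $|\cdot|^2$ combined with Young's inequality gives, for any $\beta>0$,
\begin{equation*}
\int_{A^*_k}|\nabla w_{\eps_k}|^2\le (1+\beta)\!\int_{A^*_k}\!\bigl[(1-\eta_k)|\nabla v_{\eps_k}|^2+\eta_k|\nabla u_{\eps_k}|^2\bigr]+C_\beta\Bigl(\tfrac{N}{\delta}\Bigr)^{\!2}\norm{u_{\eps_k}-v_{\eps_k}}_{L^2(A^*_k)}^2,
\end{equation*}
where the first term is at most $(1+\beta)\cdot 2C/N$ and the second vanishes as $k\to\infty$ by \eqref{equation:convergence}. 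For the potential term, the key observation is that a convex combination of two values both in the same phase (either $\ge\eps_k^{1/2}$ or $\le-\eps_k^{1/2}$) remains in that phase, where $W(w_{\eps_k}/\eps_k^{1/2})=0$. The residual ``bad'' set in $A^*_k$ is contained in the transition layers $\{|u_{\eps_k}|<\eps_k^{1/2}\}\cup\{|v_{\eps_k}|<\eps_k^{1/2}\}$, whose measure is $O(\eps_k)\cdot(2C/N)$ from the $W$-energy bound, together with the phase-mismatch set, whose measure is $o(1)$ by \eqref{equation:convergence:H}; both contribute negligibly to the potential energy.

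The main obstacle is controlling the energy contributions from the buffer annuli $B_{r^-_k}\setminus B_\rho$ and $B_{\rho+\delta}\setminus B_{r^+_k}$, where $w_{\eps_k}$ coincides with $v_{\eps_k}$ and $u_{\eps_k}$ respectively and whose energies are a priori only uniformly bounded, not small. Following \cite[Proposition~5.1]{savinAltPhillips}, this is resolved by a refined selection of $r^\pm_k$ that simultaneously makes these buffer energies at most $\gamma/4$ each; the $L^2$- and $\tilde H$-closeness assumptions \eqref{equation:convergence}--\eqref{equation:convergence:H} ensure that the radial distributions of $E_{\eps_k}(u_{\eps_k},\cdot)$ and $E_{\eps_k}(v_{\eps_k},\cdot)$ are asymptotically aligned on the annulus, leaving room for such $r^\pm_k$ with $r^-_k<r^+_k$ to exist, so that combining the above estimates yields \eqref{equation:interpolation:energy}.
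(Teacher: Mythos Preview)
Your convex-combination ansatz has a genuine gap in the potential term. On the phase-mismatch set, say where $u_{\eps_k}\ge\eps_k^{1/2}$ and $v_{\eps_k}\le-\eps_k^{1/2}$, the interpolant $w_{\eps_k}=(1-\eta_k)v_{\eps_k}+\eta_k u_{\eps_k}$ sweeps through the well of $W$ as $|x|$ varies across $A^*_k$, so that $\frac{1}{\eps_k}W\bigl(w_{\eps_k}/\eps_k^{1/2}\bigr)$ is of order $1/\eps_k$ on a set whose measure you only know to be $o(1)$ from \eqref{equation:convergence:H}. The resulting contribution is $o(1)/\eps_k$, which need not tend to zero; neither \eqref{equation:convergence} nor \eqref{equation:convergence:H} carries any rate, and no pigeonhole in $N$ can manufacture the missing factor of $\eps_k$. (Your measure estimate $O(\eps_k)\cdot 2C/N$ for the transition layers is also not quite right, since $W$ vanishes at $\pm1$; that part can be rescued via the pointwise bound $W(w_{\eps_k}/\eps_k^{1/2})\le W(u_{\eps_k}/\eps_k^{1/2})+W(v_{\eps_k}/\eps_k^{1/2})$ whenever $u_{\eps_k},v_{\eps_k}$ have the same sign, but the mismatch set is the real obstruction.) The buffer-annulus issue you flag as the ``main obstacle'' is in fact secondary.

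The paper's construction is designed precisely to avoid this. After reducing to the ordered case $u_{\eps_k}\ge v_{\eps_k}$, it sets $w_{\eps_k}=\min\{u_{\eps_k},\max\{v_{\eps_k},\psi_{r,k}\}\}$ with $\psi_{r,k}(x)=\varphi_{\theta,\eps_k}(|x|-r)$ the one-dimensional solution of Lemma~\ref{lemma:1D:properties:theta}. The point is that $\psi_{r,k}$ makes its transition on the \emph{correct} length scale $O(\eps_k)$ and satisfies the equipartition $(\varphi')^2=\eps_k^{-1}W(\varphi/\eps_k^{1/2})+\theta$, so on $\{v_{\eps_k}<\psi_{r,k}<u_{\eps_k}\}$ the full energy density equals $2|\nabla\psi_{r,k}|^2$. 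The coarea formula then expresses this as integrals of $\mathcal H^{n-1}\bigl(\partial B_{r+t}\cap\{v_{\eps_k}<s<u_{\eps_k}\}\bigr)$, and averaging in $r$ converts them by Fubini into exactly the $L^1$ quantities in \eqref{equation:convergence}--\eqref{equation:convergence:H}, with no factor of $1/\eps_k$ left over. Your linear cutoff places the transition on the scale $\delta/N$ rather than $\eps_k$; the min--max with the optimal 1D profile is not a stylistic choice but the mechanism that makes the estimate close.
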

\begin{proof}
	 Let $\gamma > 0$ and notice that by \eqref{eqn:energybounds} we can pick $\delta$ small enough such that
	\begin{equation}
		\label{equation:energy:gamma}
		E_{\eps_k}(u_{\eps_k},B_{\rho+\delta}\backslash B_{\rho}) \leq \frac{\gamma}{3} \text{ and }	E_{\eps_k}(v_{\eps_k},B_{\rho+\delta}\backslash B_{\rho}) \leq \frac{\gamma}{3}. 
	\end{equation}
	Moreover, we only need to prove the conclusion under the assumption that $u_{\eps_k} \geq v_{\eps_k}$. Indeed, the interpolation follows for unordered sequences by first interpolating between $u_{\eps_k}$ and $\min\{u_{\eps_k},v_{\eps_k}\}$ in $B_{\rho+\delta} \backslash B_{\rho+\delta/2}$, and then between $\min\{u_{\eps_k},v_{\eps_k}\}$  and $v_{\eps_k}$ in the annulus $B_{\rho+\delta/2}\backslash B_{\rho}$.
	Thus assuming $u_{\eps_k} \geq v_{\eps_k}$ in $B_1$, we define the constant $\theta_{\delta} = 16M\delta^{-1}$, and for each $r \in \left[\rho+\delta/8,\rho+\delta/4\right]$ we set
	\begin{equation*}
		\psi_{r,k} (x) = \varphi_{\theta_{\delta},\eps_k} (\abs{x}-r),
	\end{equation*}
	where $\varphi_{\theta_{\delta},\eps_k}$ is defined in Lemma \ref{lemma:1D:properties:theta}. We notice that with this choice of $\theta_{\delta}$, we have that $\psi_{r,k}\leq -M$ on $B_{\rho}$ and $\psi_{r,k} \geq M$ on $B_{1} \backslash B_{\rho+\delta}$, as long as $k$ is chosen sufficiently large. Therefore, defining
	\begin{equation}
		\Psi_{r,k} = \min\{u_{\eps_k},\max\{v_{\eps_k},\psi_{r,k}\}\},
	\end{equation}
	we have that $\Psi_{r,k}=v_{\eps_k}$ in $B_{\rho}$ while $\Psi_{r,k} = u_{\eps_k}$ in $B_{1} \backslash B_{\rho+\delta}$. Using now \eqref{equation:energy:gamma}, we have that 
	\begin{align*}
		E_{\eps_k}(\Psi_{r,k},B_1)
		&\leq E_{\eps_k}(v_{\eps_k},B_{\rho}) + E_{\eps_k}(u_{\eps_k},B_1 \backslash B_{\rho+\delta}) + E_{\eps_k}(\Psi_{r,k},B_{\rho+\delta}\backslash B_{\rho})\\
		&\leq E_{\eps_k}(v_{\eps_k},B_{\rho}) + E_{\eps_k}(u_{\eps_k},B_1 \backslash B_{\rho+\delta}) + E_{\eps_k}(u_{\eps_k},B_{\rho+\delta}\backslash B_{\rho}) + E_{\eps_k}(v_{\eps_k},B_{\rho+\delta}\backslash B_{\rho}) \\& \hspace{2mm}+E_{\eps_k}(\psi_{r,k},B_{\rho+\delta}\backslash B_{\rho} \cap \{v_{\eps_k}<\Psi_{r,k}<u_{\eps_k}\})\\
		&\leq E_{\eps_k}(v_{\eps_k},B_{\rho}) + E_{\eps_k}(u_{\eps_k},B_1 \backslash B_{\rho+\delta})\\&\hspace{2mm} + E_{\eps_k}(\psi_{r,k},(B_{\rho+\delta}\backslash B_{\rho}) \cap \{v_{\eps_k}<\Psi_{r,k}<u_{\eps_k}\}) + \frac23\gamma.
	\end{align*}
	Now setting $D_{r,k} := (B_{\rho+\delta}\backslash B_{\rho}) \cap \{v_{\eps_k}<\Psi_{r,k}<u_{\eps_k}\}$, we have using \eqref{equation:1d:theta}, the coarea formula, and the fact that $-M \leq v_{\eps_k} < \psi_{r,k} < u_{\eps_k} \leq M$ on $D_{r,k}$  that
	\begin{align*}
		E_{\eps_k} (\psi_{r,k},D_{r,k})
		&\leq \int_{D_{r,k}} 2\abs{\nabla \psi_{r,k}}^2 \\
		&= \int_{D_{r,k}} 2 \abs{\nabla \psi_{r,k}}\sqrt{\frac{1}{\eps_k}W\left(\frac{\psi_{r,k}}{\eps_k^{1/2}}\right) + \theta_{\delta}}\\
		&\leq \int_{D_{r,k}} \frac{2}{\eps_k^{1/2}}\abs{\nabla \psi_{r,k}}\sqrt{W\left(\frac{\psi_{r,k}}{\eps_k^{1/2}}\right)} + \int_{D_{r,k}}2 \abs{\nabla \psi_{r,k}} \sqrt{\theta_{\delta}}\\
		&= \frac{c_0}{2}\int_{D_{r,k}} \abs{\nabla \tilde{H}\left(\frac{\psi_{r,k}}{\eps_k^{1/2}}\right)} +  \int_{D_{r,k}}2 \abs{\nabla \psi_{r,k}} \sqrt{\theta_{\delta}}\\
		&= \int_{-1}^{1} \mathcal{H}^{n-1}\left(\left\{ \tilde{H}\left(\frac{\psi_{r,k}}{\eps_k^{1/2}}\right) = s\right\} \cap D_{r,k}\right) ds\\ &\hspace{2mm}+ 2\sqrt{\theta_{\delta}}\int_{-M}^{M} \mathcal{H}^{n-1}\left(\left\{ \psi_{r,k} = s\right\} \cap D_{r,k}\right) ds\\
		&= \int_{-1}^{1} \mathcal{H}^{n-1}\left(\left\{ \frac{\psi_{r,k}}{\eps_k^{1/2}} = s\right\} \cap D_{r,k}\right) 2\sqrt{W(s)} ds\\ &\hspace{2mm}+ 2\sqrt{\theta_{\delta}}\int_{-M}^{M} \mathcal{H}^{n-1}\left(\left\{ \psi_{r,k} = s\right\} \cap D_{r,k}\right) ds.
	\end{align*}
	In the last line we have used the change of variables $s=\tilde{H}(t)$ in the first integral. Now using the fact that
	\begin{equation*}
		\left\{\frac{\psi_{r,k}}{\eps_{k}^{1/2}} = s\right\} \cap D_{r,k} = \left\{\frac{v_{\eps_k}}{\eps_k^{1/2}} < s < \frac{u_{\eps_k}}{\eps_k^{1/2}}\right\} \cap \p B_{r+\varphi_{\theta_{\delta},k}^{-1}(\eps_k^{1/2}s)} \cap (B_{\rho+\delta}\backslash B_{\rho}),
	\end{equation*}
	and
	\begin{equation*}
		\left\{\psi_{r,k}= s\right\} \cap D_{r,k} = \left\{v_{\eps_k} < s < u_{\eps_k}\right\} \cap \p B_{r+\varphi_{\theta_{\delta},k}^{-1}(s)} \cap (B_{\rho+\delta}\backslash B_{\rho}),
	\end{equation*}
	we can average in $r \in [\rho+\frac{\delta}{8}, \rho+\frac{\delta}{4} ]$ and obtain
	\begin{align*}
		\strokedint_{\rho+\delta/8}^{\rho+\delta/4} E_{\eps_k} (\psi_{r,k},D_{r,k})dr
		&\leq \frac{C}{\delta}\int_{-1}^{1} \mathcal{H}^{n}\left(\left\{\frac{v_{\eps_k}}{\eps_k^{1/2}} < s < \frac{u_{\eps_k}}{\eps_k^{1/2}}\right\} \cap  (B_{\rho+\delta}\backslash B_{\rho})\right) 2\sqrt{W(s)} ds \\&\hspace{2mm}+  \frac{C}{\delta^{3/2}}\int_{-M}^{M} \mathcal{H}^{n}\left( \left\{v_{\eps_k} < s < u_{\eps_k}\right\} \cap (B_{\rho+\delta}\backslash B_{\rho})\right) ds.
	\end{align*}
	For the second integral on the right, we have by applying Fubini's theorem that
	\begin{align*}
		\int_{-M}^{M} \mathcal{H}^{n}\left( \left\{v_{\eps_k} < s < u_{\eps_k}\right\} \cap (B_{\rho+\delta}\backslash B_{\rho})\right) ds
		&= 	\int_{-M}^{M} \int_{B_{\rho+\delta}\backslash B_{\rho}} \chi_{\left\{v_{\eps_k} < s < u_{\eps_k}\right\}} dxds\\
		&=  \int_{B_{\rho+\delta}\backslash B_{\rho}} 	\int_{-M}^{M} \chi_{\left\{v_{\eps_k} < s < u_{\eps_k}\right\}} dsdx\\
		&= \int_{B_{\rho+\delta}\backslash B_{\rho}} \abs{u_{\eps_k}-v_{\eps_k}}.
	\end{align*}
	Now making the change of variables $t = \tilde{H}(s)$ in the first integral and then arguing as above we have that
	\begin{align*}
		\int_{-1}^{1} \mathcal{H}^{n}\left(\left\{\frac{v_{\eps_k}}{\eps_k^{1/2}} < s < \frac{u_{\eps_k}}{\eps_k^{1/2}}\right\} \cap  (B_{\rho+\delta}\backslash B_{\rho})\right) 2\sqrt{W(s)} ds
		&= \int_{-1}^{1} \mathcal{H}^{n}\left(\left\{ \tilde{H}\left(\frac{v_{\eps_k}}{\eps_k^{1/2}}\right) < s < \tilde{H}\left(\frac{u_{\eps_k}}{\eps_k^{1/2}}\right)\right\} \cap  (B_{\rho+\delta}\backslash B_{\rho})\right) ds\\
		&= \int_{B_{\rho+\delta}\backslash B_{\rho}} \abs{\tilde{H}\left(\frac{u_{\eps_k}}{\eps_k^{1/2}}\right)-\tilde{H}\left(\frac{v_{\eps_k}}{\eps_k^{1/2}}\right)}.
	\end{align*}
	Thus, we obtain
	\begin{align*}
		\strokedint_{\rho+\delta/8}^{\rho+\delta/4} E_{\eps_k} (\psi_{r,k},D_{r,k})dr
		\leq \frac{C}{\delta}\left( \norm{\tilde{H}\left(\frac{u_{\eps_k}}{\eps_k^{1/2}}\right)-\tilde{H}\left(\frac{v_{\eps_k}}{\eps_k^{1/2}}\right)}_{L^1(B_{\rho+\delta}\backslash B_{\rho})} +  \norm{u_{\eps_k}-v_{\eps_k}}_{L^2(B_{\rho+\delta}\backslash B_{\rho})}\right) \leq \gamma/3,
	\end{align*}
	for $k$ sufficiently large thanks to \eqref{equation:convergence} and \eqref{equation:convergence:H}. Consequently, for each such $k$ there is an $r_k \in [\rho+\delta/8,\rho+\delta/4]$ such that
	\begin{equation*}
		E_{\eps_k}(\psi_{r_k,k}, D_{r_k,k}) \leq \frac{\gamma}{3}.
	\end{equation*}
	Therefore, setting $w_{\eps_k} = \Psi_{r_k,k}$ yields the desired interpolation. 
\end{proof}

We also have the following (see \cite[Lemma A.4]{fig04}).
\begin{proposition}
	\label{proposition:boundedness}
	Let $u_{\eps_k}$ be a local minimiser for $E_{\eps_k}(\cdot, B_1)$ over the set $\tilde{\mathscr{A}}_M(B_1)$. Then for each $ 0<R<1$ we have that
	\begin{equation}
		\label{equation:energy:bounds}
		E_{\eps_k}(u_{\eps_k},B_R) \leq C(R,M)
	\end{equation}
	and 
	\begin{equation}
		\label{equation:h1:bound}
	\norm{u_{\eps_k}}_{H^1(B_R)} \leq C(R,M)
	\end{equation}
	for $k \in \N$ large enough. 
\end{proposition}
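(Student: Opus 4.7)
The plan is to test the local minimality of $u_{\eps_k}$ against a competitor that equals the constant $-M$ on a ball slightly larger than $B_R$, agrees with $u_{\eps_k}$ outside a slightly larger annulus, and uses the radial $1$D profile from Lemma \ref{lemma:1D:properties:theta} in between. The competitor is designed so that the excess energy incurred in the transition layer is controlled uniformly by the energy of the $1$D profile.

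Fix radii $R < r_0 - \delta < r_0 + \delta < 1$ and take $\theta > 0$ large enough (depending only on $M$ and $\delta$) so that, for $k$ large, the radial function $\psi_k(x) = \varphi_{\theta, \eps_k}(|x| - r_0)$ satisfies $\psi_k \leq -M$ on $B_{r_0 - \delta}$ and $\psi_k \geq M$ on $B_1 \setminus B_{r_0 + \delta}$; such a $\theta$ exists by the uniform convergence of $\varphi_{\theta,\eps_k}$ to a linear function in Lemma \ref{lemma:1D:properties:theta}. Define the competitor
\begin{equation*}
 v_{\eps_k} = \max\bigl\{-M,\, \min\{u_{\eps_k}, \psi_k\}\bigr\},
\end{equation*}
which lies in $\tilde{\mathscr{A}}_M(B_1)$, equals $-M$ on $B_{r_0 - \delta}$, and equals $u_{\eps_k}$ on $B_1 \setminus B_{r_0 + \delta}$, so that $u_{\eps_k} - v_{\eps_k} \in H^1_0(B_1)$.

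At a.e.\ point, $v_{\eps_k}$ is equal to one of $u_{\eps_k}, \psi_k, -M$, and since $W(-M/\eps_k^{1/2}) = 0$ for $\eps_k$ small, the energy density of $v_{\eps_k}$ is pointwise dominated by the sum of the energy densities of $u_{\eps_k}$ and $\psi_k$. Applying local minimality, cancelling the matching exterior contributions on $B_1 \setminus B_{r_0 + \delta}$, and noting that $v_{\eps_k} = -M$ contributes zero on $B_{r_0 - \delta}$, we obtain
\begin{equation*}
 E_{\eps_k}(u_{\eps_k}, B_{r_0 + \delta}) \leq E_{\eps_k}(u_{\eps_k}, B_{r_0+\delta}\setminus B_{r_0-\delta}) + E_{\eps_k}(\psi_k, B_{r_0+\delta}\setminus B_{r_0-\delta}).
\end{equation*}
Cancelling the (a priori finite) annular contribution of $u_{\eps_k}$ from both sides yields $E_{\eps_k}(u_{\eps_k}, B_R) \leq E_{\eps_k}(\psi_k, B_{r_0+\delta}\setminus B_{r_0-\delta})$.

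The main point is then the uniform bound on $E_{\eps_k}(\psi_k,\cdot)$. Using \eqref{equation:1d:theta}, the energy density of $\psi_k$ equals $2\eps_k^{-1}W(\psi_k/\eps_k^{1/2}) + \theta$; the $\theta$-term integrates to a bounded constant, while the $W/\eps_k$-term concentrates in the thin radial layer $\{||x|-r_0|\leq t_{\eps_k}\}$. Applying the coarea formula in the radial variable and the substitution $t = \psi_k/\eps_k^{1/2}$, the inner integral over this layer becomes $\int_{-1}^{1}\frac{2 W(t)}{\sqrt{W(t) + \theta \eps_k}}\,dt$, which is dominated by $\int_{-1}^{1} 2\sqrt{W(t)}\,dt = c_0$ uniformly in $k$. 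This establishes \eqref{equation:energy:bounds}, and \eqref{equation:h1:bound} then follows at once by combining the resulting gradient bound with the $L^\infty$ bound $\abs{u_{\eps_k}} \leq M$. The main technical obstacle is exactly this last energy estimate for the $1$D profile, for which the coarea formula and \eqref{equation:1d:theta} provide the right $\eps_k$-uniform control; the rest of the argument is the algebraic median-competitor manipulation.
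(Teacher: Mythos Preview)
Your argument is correct and follows the same underlying strategy as the paper: compare $u_{\eps_k}$ against a radial competitor built from a one-dimensional profile and bound the energy of that profile uniformly in $k$. The execution, however, is genuinely different in two places. First, the paper uses the heteroclinic profile $\varphi_{\eps_k}$ of Lemma~\ref{lemma:1d:properties} (which only asymptotes to $\pm\eps_k^{1/2}$) and then glues it to a piecewise linear function $\tilde v$ reaching $\pm M$, essentially replaying the recovery-sequence construction of Lemma~\ref{lemma:gamma:limsup}; you instead use the profile $\varphi_{\theta,\eps_k}$ of Lemma~\ref{lemma:1D:properties:theta} with $\theta$ large, so that the competitor reaches $\pm M$ by itself and no external gluing is needed. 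Second, the paper compares on the set $\{u_{\eps_k}>v_{\eps_k}\}$, whereas your median $v_{\eps_k}=\max\{-M,\min\{u_{\eps_k},\psi_k\}\}$ forces $v_{\eps_k}=u_{\eps_k}$ identically outside $B_{r_0+\delta}$ and $v_{\eps_k}\equiv -M$ on $B_{r_0-\delta}$, so the cancellation is purely algebraic. Your route is a bit more self-contained (it does not appeal to the recovery-sequence machinery) and sidesteps the boundary-matching issue near $\partial B_1$; the paper's route has the advantage of reusing the estimate already proved in Lemma~\ref{lemma:gamma:limsup}. The uniform bound on $E_{\eps_k}(\psi_k,\cdot)$ via \eqref{equation:1d:theta} and the substitution $t=\psi_k/\eps_k^{1/2}$ is exactly right and gives the same $c_0$ the paper obtains.
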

\begin{proof}
	Let 
	\begin{equation*}
		\tilde{v}=
		\begin{cases}
			M \min\{\frac{2}{1-R}\dist(x, \p B_{\frac{R+1}{2}}),1\} & x \in B_{\frac{R+1}{2}} \\
			M\max\{-\frac{2}{1-R}\dist(x, \p B_{\frac{R+1}{2}}),-1\} & x \in B^c_{\frac{R+1}{2}} 
		\end{cases}
	\end{equation*}
	and notice that $\tilde{v} = M$ on $B_{R}$.  Letting $s_R$ be the signed distance to $\p B_R$ with the convention that $s_R > 0$ on $B_R$ we define the sequence of functions
	\begin{align*}
		v_{\eps_k}(x) = &\max\{\varphi_{\eps_k}(s_{R}(x)),(\tilde{v}-\frac{2M}{1-R}t_{\eps_k})_+\}\chi_{\{s_R(x)\geq0\}} \\
		&+ \min\{\varphi_{\eps_k}(s_{R}(x)),-(\tilde{v}+ \frac{2M}{1-R}t_{\eps_k})_-\}\chi_{\{s_R(x)<0\}},
	\end{align*}
	where $\varphi_{\eps_k}$ and $t_{\eps_k}$ are defined in Lemma \ref{lemma:1d:properties}, so that arguing as in the proof Lemma \ref{lemma:gamma:limsup}, we have for $k$ large enough that
	\begin{align*}
		E_{\eps_k}(v_{\eps_k},B_1)
		&\leq \int_{-1}^1 2\sqrt{W(s)} Per\left(\left\{\frac{\varphi_{\eps_k}(s_{R}(x))}{\eps_k^{1/2}}>s\right\}, B_1\right) +\int_{B_1\cap\{\abs{v_{\eps_k}}\geq \eps_k^{1/2}\}} \abs{\nabla v_{\eps_k}}^2 + \frac{1}{2}\\
		&\leq c_0 \mathcal{H}^{n-1}(\p B_R) + \int_{B_1} \abs{\nabla \tilde{v}}^2 + 1.
	\end{align*}
	Since $\abs{\nabla \tilde{v}} \leq C(R,M)$ we have that
	\begin{equation}
		\label{equation:veps:bounded}
			E_{\eps_k}(v_{\eps_k},B_1) \leq C(R,M).
	\end{equation}
	Since $u_{\eps_k} \leq v_{\eps_k}$ on $B_R$ for $k$ large enough, and $u_{\eps_k} \geq v_{\eps_k}$ on $\p B_1$, we have that 
	\begin{align*}
		E_{\eps_k}(u_{\eps_k},B_R) \leq E_{\eps_k}(u_{\eps_k},\{u_{\eps_k} > v_{\eps_k}\}) \leq E_{\eps_k}(v_{\eps_k},\{u_{\eps_k} > v_{\eps_k}\}) \leq E_{\eps_k}(v_{\eps_k},B_1),
	\end{align*}
	which with \eqref{equation:veps:bounded} establishes \eqref{equation:energy:bounds}. 
	Since $\abs{u_{\eps_k}} \leq M$ we immediately obtain \eqref{equation:h1:bound} which concludes the proof. 
\end{proof}

We can now give the
\begin{proof}[Proof of Theorem \ref{theorem:main:existence}]
	We observe that thanks to Proposition \ref{proposition:boundedness} we have up to a subsequence that
	\begin{equation*}
		u_{\eps_k} \to u \text{ in }L^2_{\loc}(B_1) \text{ as } k \to \infty.
	\end{equation*}
	Moreover, by Young's inequality, we have for each $R>0$ that
	\begin{align*}
		\int_{B_R} \abs{\nabla \tilde{H}\left(\frac{u_{\eps_k}}{\eps_k^{1/2}}\right)} =\frac{2}{c_0} \int_{B_R} 2\sqrt{W\left(\frac{u_{\eps_k}}{\eps_k^{1/2}}\right)} \abs{\nabla u_{\eps_k}}\eps_k^{-1/2} \leq \frac{2}{c_0} E_{\eps_k}(u_{\eps_k},B_R).
	\end{align*}
	Hence, using Proposition \ref{proposition:boundedness} and the fact that $\abs{\tilde{H}}\leq 1$, we conclude that $\tilde{H}\left(\frac{u_{\eps_k}}{\eps_k^{1/2}}\right)$ is bounded in $BV(B_R)$. Therefore, up to taking another subsequence, we have that 
	\begin{equation*}
		 \tilde{H}\left(\frac{u_{\eps_k}}{\eps_k^{1/2}}\right) \to g \text{ in } L^1_{\loc}(B_1) \text{ as } k \to \infty,
	\end{equation*}
	for some $g \in L^1_{\loc}(B_1)$. Now we claim that 
	\begin{equation}
		\label{equation:g:sets}
		g = \chi_{\Omega} - \chi_{\Omega^c}
	\end{equation}
	for some measurable set $\Omega$. Indeed, let $\delta > 0$ small and arguing as in the proof of Lemma \ref{lemma:gamma:limsup} we find that
	\begin{align*}
		\abs{\left\{-1+2\delta \leq g \leq 1-2\delta\right\}\cap B_R}
		&\leq C(\delta,W) \int_{B_R} W\left(\frac{u_{\eps_k}}{\eps_k^{1/2}}\right) +o(1),
	\end{align*}
	 as $k \to \infty$, so that by Proposition \ref{proposition:boundedness} 
	\begin{align*}
		\abs{\left\{-1+2\delta \leq g \leq 1-2\delta\right\}\cap B_R} \to 0 \text{	as $k \to \infty$,}
	\end{align*}
	 which establishes \eqref{equation:g:sets}. Now since 
	\begin{equation*}
		\tilde{H}\left(\frac{u_{\eps_k}}{\eps_k^{1/2}}\right) =-1 \text{ for } u_{\eps_k} \leq -\eps_k^{1/2},
	\end{equation*}
	and 
	\begin{equation*}
		\tilde{H}\left(\frac{u_{\eps_k}}{\eps_k^{1/2}}\right) =1 \text{ for } u_{\eps_k} \geq \eps_k^{1/2},
	\end{equation*}
	we have that $u \geq 0$ a.e. on $\Omega$ while $u\leq 0$ a.e. on $\Omega^c$. Moreover, since $\abs{u_{\eps_k}}\leq M$ we have that $\abs{u} \leq M$ and so $(u,\Omega) \in \mathscr{A}_M(B_1)$ (the fact that $\Omega$ is a set of finite perimeter follows from \eqref{equation:final:aim} below). 
	
	Now let $(v,\Sigma) \in \mathscr{A}(B_1)$ such that $v=u$ on $B_1\backslash B_{R}$ for some $R<1$. We we will show that
	\begin{equation}
		\label{equation:final:aim}
		E_{B_1}(u,\Omega) \leq E_{B_1}(v,\Sigma).
	\end{equation}
	 To this end we let $v_{\eps_k}$ be the sequence of functions constructed in Lemma \ref{lemma:gamma:limsup} for the pair $(v,\Sigma)$, and since we can assume that $E_{B_{1}}(v,\Sigma) <\infty$ (or else \eqref{equation:final:aim} holds trivially) we can apply Proposition \ref{proposition:interpolation} to the sequences $u_{\eps_k}$ and $v_{\eps_k}$ with $\rho=R$. Hence for any $\gamma > 0$ there exists a $\delta > 0$ small and a sequence $w_{\eps_k}$ such that $w_{\eps_k} = u_{\eps_k}$ in $B_1 \backslash \overline{B}_{\rho+\delta}$. Moreover, for $k$ large enough we have by the minimality of $u_{\eps_k}$ and \eqref{equation:interpolation:energy} that
	\begin{align*}
		E_{\eps_k}(u_{\eps_k}, B_1) \leq E_{\eps_k}(w_{\eps_k},B_1) \leq E_{\eps_k}(v_{\eps_k},B_{\rho})+ E_{\eps_k}(u_{\eps_k},B_1\backslash \overline{B}_{\rho+\delta}) + \gamma.
	\end{align*}
	We therefore find using Theorem \ref{theorem:gamma:convergence} that
	\begin{align*}
		E_{B_{\rho+\delta}}(u,\Omega) 
		 &\leq\liminf_{k\to\infty} E_{\eps_k}(u_{\eps_k}, B_{\rho+\delta})\\
		 &\leq \limsup_{k\to\infty} E_{\eps_k}(v_{\eps_k},B_{\rho+\delta})+\gamma \\
		 &\leq E_{B_{\rho+\delta}}(v,\Sigma) + \gamma.
	\end{align*}
	Since $E_{B_1\backslash B_{\rho+\delta}}(u,\Omega) = E_{B_1\backslash B_{\rho+\delta}}(v,\Sigma)$, this establishes \eqref{equation:final:aim} and completes the proof. 
	\end{proof}

\bibliographystyle{alpha}
\bibliography{References.bib}
\end{document}